\newcommand{\code}[1]{\lstinline{#1}}
\newcommand{\setD}{\mathcal{D}} 
\newcommand{\suppf}[1]{\text{supp}(#1)}
\newcommand{\R}{\mathbb{R}}
\newcommand{\F}{\mathbb{F}}
\newcommand{\N}{\mathbb{N}}
\newcommand{\x}{\mathbf{x}}
\newcommand{\e}{\mathbf{e}}
\newcommand{\y}{\mathbf{y}}
\newcommand{\nbenchs}{30}
\newcommand{\alphab}{\boldsymbol{\alpha}}
\newcommand{\deltab}{\boldsymbol{\delta}}
\renewcommand{\b}{\mathbf{b}}
\newcommand{\f}{\mathbf{f}}
\def\P{\mathbf{P}}
\def\Q{\mathbf{Q}}
\def\L{\mathbf{L}}
\def\D{\mathbf{D}}
\def\m{\mathbf{m}}
\def\f{f}
\def\a{\mathbf{a}}
\def\m{\mathbf{m}}
\def\S{\mathbf{S}}
\def\E{\mathbf{E}}
\def\K{\mathbf{K}}
\def\S{\mathbf{S}}
\def\Q{\mathbf{Q}}
\def\X{\mathbf{X}}
\renewcommand{\prec}{\text{prec}}
\newcommand{\Kpol}{\K_{\text{poly}}}
\newcommand{\iaboundfun}[2]{\mathtt{ia\_bound}(#1, #2)}
\newcommand{\iabound}{\mathtt{ia\_bound}}
\newcommand{\sdpboundfun}[3]{\mathtt{sdp\_bound}(#1, #2, #3)}
\newcommand{\sdpbound}{\mathtt{sdp\_bound}}
\newcommand{\boundfun}[7]{\mathtt{bound}(#1, #2, #3, #4, #5, #6, #7)}
\newcommand{\bound}{\mathtt{bound}}
\newcommand{\boundnlprogfun}[7]{\mathtt{bound\_nlprog}(#1, #2, #3, #4, #5, #6, #7)}
\newcommand{\boundnlprog}{\mathtt{bound\_nlprog}}
\newcommand{\sdppolyfun}[3]{\mathtt{sdp\_poly}(#1, #2, #3)}
\newcommand{\sdppoly}{\mathtt{sdp\_poly}}
\newcommand{\liftfun}[3]{\mathtt{lift}(#1, #2, #3)}
\newcommand{\lift}{\mathtt{lift}}
\newcommand{\poly}{_\text{poly}}
\newcommand{\sa}{_\text{sa}}
\newcommand{\sdpsa}{\mathtt{sdp\_sa}}
\newcommand{\sdptransc}{\mathtt{sdp\_transc}}
\newcommand{\sthreefp}{\mathtt{s3fp}}
\newcommand{\realtofloat}{\mathtt{Real2Float}}
\newcommand{\smtcoq}{\mathtt{smtcoq}}
\newcommand{\hol}{\text{\sc Hol-light}}
\newcommand{\bop}{\mathtt{bop}}
\newcommand{\coq}{\text{\sc Coq}}
\newcommand{\ocaml}{\text{\sc OCaml}}
\newcommand{\rosa}{\mathtt{Rosa}}
\newcommand{\sdpa}{\text{\sc Sdpa}}
\newcommand{\fptaylor}{\mathtt{FPTaylor}}
\newcommand{\nlcertify}{\mathtt{NLCertify}}
\newcommand*{\circled}{\@ifstar\circledstar\circlednostar}
\newcommand*{\squared}{\@ifstar\squaredstar\squarednostar}
\newcommand*\circledstar[1]{%
  \tikz[baseline=(C.base)]
    \node[%
      fill,
      circle,
      minimum size=1.em,
      text=white,
      inner sep=0.5pt
    ](C) {\texttt{#1}};%
}
\newcommand*\circlednostar[1]{%
  \tikz[baseline=(C.base)]
    \node[%
      draw,
      circle,
      minimum size=1.em,
      inner sep=0.5pt
    ](C) {\texttt{#1}};%
}
\newcommand*\squaredstar[1]{%
  \tikz[baseline=(C.base)]
    \node[%
      fill,
      rectangle,
      minimum size=1.em,
      text=white,
      inner sep=0.5pt
    ](C) {\texttt{#1}};%
}
\newcommand*\squarednostar[1]{%
  \tikz[baseline=(C.base)]
    \node[%
      draw,
      rectangle,
      minimum size=1.em,
      inner sep=0.5pt
    ](C) {\texttt{#1}};%
}
\theoremstyle{plain}
\newtheorem{theorem}{Theorem}[section]
\newtheorem{corollary}[theorem]{Corollary}
\newtheorem{lemma}[theorem]{Lemma}
\newtheorem{proposition}[theorem]{Proposition}
\newtheorem{example}{Example}
\theoremstyle{definition}
\newtheorem{definition}{Definition}
\theoremstyle{remark}
\newtheorem{remark}{Remark}
\newtheorem{theorem}{Theorem}[section]
\newtheorem{lemma}[theorem]{Lemma}
\theoremstyle{plain}
\newtheorem{definition}[theorem]{Definition}
\newtheorem{example}{Example}
\newtheorem{remark}{Remark}
\begin{document}






\title{Certified Roundoff Error Bounds Using Semidefinite Programming}


\author{VICTOR MAGRON
\affil{CNRS Verimag}
GEORGE CONSTANTINIDES 
\affil{Imperial College London}
ALASTAIR DONALDSON
\affil{Imperial College London}
}




\begin{abstract}
Roundoff errors cannot be avoided when implementing numerical programs with finite precision.         
The ability to reason about rounding is especially important if one wants to explore a range of potential representations, for instance for FPGAs or custom hardware implementations. This problem becomes challenging when the program does not employ solely linear operations, and non-linearities are inherent to many interesting computational problems in real-world applications. 

Existing solutions to reasoning possibly lead to either inaccurate bounds or high analysis time in the presence of nonlinear correlations between variables.
Furthermore, while it is easy to implement a straightforward method such as interval arithmetic, sophisticated techniques are less straightforward to implement in a formal setting. Thus there is a need for methods which output certificates that can be formally  validated inside a proof assistant.

We present a framework to provide upper bounds on absolute roundoff errors of floating-point nonlinear programs. 
This framework is based on optimization techniques employing semidefinite programming and sums of squares certificates, which can be checked inside the Coq theorem prover to provide formal roundoff error bounds for polynomial programs.
Our tool covers a wide range of nonlinear programs, including polynomials and transcendental operations as well as conditional statements.                                                         
We illustrate the efficiency and  precision of this tool on non-trivial programs coming from biology, optimization and space control. 
Our tool produces more accurate error bounds for $23$ \% of all programs and yields better performance in $66$ \% of all programs.
\end{abstract}

\if{
\ccsdesc[500]{Mathematics of Computing~Numerical Analysis}
\ccsdesc[300]{Mathematics of Computing~Optimization}
\ccsdesc[300]{Mathematics of Computing~Semidefinite Programming}
\ccsdesc[300]{Mathematics of Computing~Convex Optimization}
}\fi

\ccsdesc[500]{Design and analysis of algorithms~Approximation algorithms analysis}
\ccsdesc[300]{Design and analysis of algorithms~Numeric approximation algorithms}

\ccsdesc[500]{Design and analysis of algorithms~Mathematical optimization}
\ccsdesc[300]{Design and analysis of algorithms~Continuous optimization}
\ccsdesc[100]{Design and analysis of algorithms~Semidefinite programming}
\ccsdesc[100]{Design and analysis of algorithms~Convex optimization}

\ccsdesc[500]{Logic~Automated reasoning}

\if{
\ccsdesc[500]{Computer systems organization~Embedded systems}
\ccsdesc[300]{Computer systems organization~Redundancy}
\ccsdesc{Computer systems organization~Robotics}
\ccsdesc[100]{Networks~Network reliability}
}\fi

\keywords{correlation sparsity pattern, floating-point arithmetic, formal verification, polynomial optimization, proof assistant, roundoff error, semidefinite programming, transcendental functions.}
\acmformat{Victor Magron, George Constantinides and Alastair Donaldson, 2016. Certified Roundoff Error Bounds Using Semidefinite Programming.}

\begin{bottomstuff}
This work was partly funded by the Engineering and Physical Sciences Research Council (EPSRC) ``Challenging Engineering'' Grant (EP/I020457/1, EP/K034448/1, EP/K015168/1 ), Royal Academy of Engineering, Imagination Technologies and European Research Council (ERC) ``STATOR'' Grant Agreement nr. 306595.

Author's addresses: V. Magron, CNRS Verimag, 700 avenue Centrale, 38401 Saint-Martin d'H\`eres FRANCE; 
G. Constantinides, Imperial College London, London SW7 2AZ, UK;
A. Donaldson, Imperial College London, London SW7 2AZ, UK.
\end{bottomstuff}


\maketitle
\section{INTRODUCTION} 
\label{sec:intro}
Constructing numerical programs which perform accurate computation turns out to be difficult, due to finite numerical precision of implementations such as floating-point or fixed-point representations. Finite-precision numbers induce roundoff errors,  and knowledge of the range of these roundoff errors is required to fulfill safety criteria of critical programs, as typically arising in modern embedded systems such as aircraft controllers. Such a knowledge can be used in general for developing accurate numerical software, but is also particularly relevant when considering migration of algorithms onto hardware (e.g. FPGAs). The advantage of architectures based on FPGAs is that they allow more flexible choices in number representations, rather than limiting the choice between  IEEE standard single or double precision. Indeed, in this case, we benefit from a more flexible number representation while still ensuring guaranteed bounds on the program output. 

To obtain lower bounds on roundoff errors, one can rely on testing approaches, such as meta-heuristic search~\cite{Borges12Test} or under-approximation tools (e.g.~$\sthreefp$~\cite{Chiang14s3fp}). Here, we are interested in efficiently handling  the complementary over-approximation problem, namely to obtain precise upper bounds on the error. This problem boils down to finding tight abstractions of linearities or non-linearities while being able to bound the resulting approximations in an efficient way.  
For computer programs consisting of linear operations, automatic error analysis can be obtained with well-studied optimization techniques based on SAT/SMT solvers~\cite{hgbk2012fmcad} and affine arithmetic~\cite{fluctuat}. However, non-linear operations are key to many interesting computational problems arising in physics, biology, controller implementations and global optimization. 
Recently, two promising frameworks have been designed to provide upper bounds for roundoff errors of nonlinear programs. The corresponding algorithms rely on Taylor-interval methods~\cite{fptaylor15}, implemented in the $\fptaylor$ tool, and on combining SMT with interval arithmetic~\cite{Darulova14Popl}, implemented in the $\rosa$ real compiler. 

%
The complexity of the mathematics underlying techniques for nonlinear reasoning, and the intricacies associated with constructing an efficient implementation, are such that a means for independent formal validation of results is particularly desirable.  
The $\rosa$ tool is based on theoretical results that should provide sound over-approximations of error bounds. While $\rosa$ relies on an SMT solver capable of generating unsatisfiability proof witnesses (thus allowing independent soundness checking), it does not  formally verify these certificates inside a proof assistant.
To the best of our knowledge, $\fptaylor$ and {\sc Gappa} are the only academic software tools that can produce formal proof certificates. For $\fptaylor$, this is based on the framework developed in~\cite{SolovyevH13} to verify nonlinear inequalities in $\hol$~\cite{hollight} using Taylor-interval methods. However, most of computation performed in the informal optimization procedure ends up being redone inside the $\hol$ proof assistant, yielding a formal verification which may be computationally demanding.

The aim of this work is to provide a formal framework to perform automated precision analysis of computer programs that manipulate finite-precision data using nonlinear operators. For such programs, guarantees can be provided with certified programming techniques.
Semidefinite programming (SDP) is relevant to a wide range of mathematical fields, including combinatorial optimization, control theory and matrix completion. In 2001, Lasserre introduced a hierarchy of SDP relaxations~\cite{Lasserre01moments} for approximating polynomial infima. Our method to bound the error is a decision procedure based on a specialized variant of the Lasserre hierarchy~\cite{Las06SparseSOS}. The procedure relies on SDP to provide sparse sum-of-squares decompositions of nonnegative polynomials. Our framework handles polynomial program analysis (involving the operations $+,\times,-$) as well as extensions to the more general class of semialgebraic and transcendental programs (involving $\sqrtsign, /, \min, \max, \arctan, \exp$), following the approximation scheme described in~\cite{Magron15sdp}.
\subsection{Overview of our Method}
\label{sec:overview}
We present an overview of our method and of the capabilities of related techniques, using an example.
Consider a program implementing the following polynomial expression $f$:
\begin{align*}
f(\x) := x_2 \times x_5 + x_3 \times x_6 - x_2 \times x_3  - x_5 \times x_6 \\
+ x_1 \times ( - x_1 +  x_2 +  x_3  - x_4 +  x_5 +  x_6) \,,
\end{align*}
where the six-variable vector $\x :=  (x_1, x_2, x_3, x_4, x_5, x_6)$ is the input of the program. For this example, assume that the set $\X$ of possible input values is a product of closed intervals: $\X = [4.00, 6.36]^6$.
This function $f$ together with the set $\X$ appear in many inequalities arising from the the proof of the Kepler Conjecture~\cite{Flyspeck06}, yielding challenging global optimization problems.


The polynomial expression $f$ is obtained by performing 15 basic operations (1 negation, 3 subtractions, 6 additions and 5 multiplications). 
When executing this program with a set of floating-point numbers $\hat{\x} :=  (\hat{x}_1, \hat{x}_2, \hat{x}_3, \hat{x}_4, \hat{x}_5, \hat{x}_6) \in \X$, one actually computes a floating-point result $\hat{f}$, where all operations $+, -, \times$ are replaced by the respectively associated floating-point operations $\oplus, \ominus, \otimes$.
The results of these operations comply with IEEE 754 standard arithmetic~\cite{IEEE} (see relevant background in Section~\ref{sec:fpbackground}). 
Here, for the sake of clarity, we do not consider real input variables but we do it later on while performing detailed comparison (see Section~\ref{sec:benchs}).
For instance, (in the absence of underflow) one can write $\hat{x}_2 \otimes \hat{x}_5 =  (x_2 \times x_5) (1 + e_1)$, by introducing an error variable $e_1$ such that $-\epsilon \leq e_1 \leq \epsilon$, where the bound $\epsilon$ is the machine precision (e.g.~$\epsilon = 2^{-24}$ for single precision). One would like to bound the absolute roundoff error $|r(\x, \e)| := | \hat{f}(\x, \e) - f (\x) |$ over  all possible input variables $\x \in \X$ and error variable  $e_1, \dots, e_{15} \in [-\epsilon, \epsilon]$. Let us define $\E := [-\epsilon, \epsilon]^{15}$ and $\K := \X \times \E$. Then our bound problem can be cast as finding the maximum $r^\star$ of $\mid r \mid$ over $\K$, yielding the following nonlinear optimization problem:
\begin{align}
\begin{split}
\label{eq:roptim}
r^\star := & \max_{(\x, \e) \in \K} | r(\x, \e) | \\
 = & \ \ \max \{-\min_{(\x, \e) \in \K} r(\x, \e), \max_{(\x, \e) \in \K} r(\x,\e)\} \enspace,
\end{split}
\end{align}
One can directly try to solve these two polynomial optimization problems using classical SDP relaxations~\cite{Lasserre01moments}.
As in~\cite{fptaylor15}, one can also decompose the error term $r$ as the sum of a term $l(\x,\e)$, which is affine w.r.t.~$\e$, and a nonlinear term $h(\x,\e) := r(\x,\e) - l(\x,\e)$. Then the triangular inequality yields:
\begin{equation}
\label{eq:lhoptim} 
r^\star \leq \max_{(\x, \e) \in \K} |l(\x, \e)| + \max_{(\x, \e) \in \K} |h(\x, \e)| \enspace. 
\end{equation}
It follows for this example that $l(\x,\e) = x_2 x_5 e_1 + x_3 x_6 e_2 +  (x_2 x_5 + x_3 x_6) e_3 + \dots + f(\x) e_{15} = \sum_{i=1}^{15} s_i(\x) e_i$, with $s_1(\x) := x_2 x_5, s_2(\x) := x_3 x_6, \dots, s_{15}(\x) := f(\x)$. The {\em Symbolic Taylor Expansions} method~\cite{fptaylor15} consists of using a simple branch and bound algorithm based on interval arithmetic to compute a rigorous interval enclosure of each polynomial $s_i$, $i = 1,\dots,15$, over $\X$ and finally obtain an upper bound of $|l| + |h|$ over $\K$. In contrast, our method uses sparse semidefinite relaxations for polynomial optimization (derived from \cite{Las06SparseSOS}) to bound $l$ and basic interval arithmetic as in~\cite{fptaylor15} to bound $|h|$ (i.e.~we use interval arithmetic to bound second-order error terms in the multivariate Taylor expansion of $r$ w.r.t.~$\e$).

The following comparison results have been obtained on an Intel Core i7-5600U CPU ($2.60\, $GHz). All execution times have been computed by averaging over five runs.
%
\begin{itemize}[noitemsep,nolistsep]
\item A direct attempt to solve the two polynomial problems occurring in Equation~\eqref{eq:roptim} fails as the SDP solver (in our case $\sdpa$~\cite{sdpa7}) runs out of memory. 
\item Using our method implemented in the $\realtofloat$ tool, one obtains an upper bound of $760 \epsilon$ for $|l| + |h|$ over $\K$ in $0.15$ seconds. This bound is provided together with a certificate which can be formally checked inside the $\coq$ proof assistant in $0.20$ seconds.
\item After normalizing the polynomial expression and using basic interval arithmetic, one obtains 8 times more quickly a coarser bound of $922 \epsilon$. 
\item Symbolic Taylor expansions implemented in $\fptaylor$ \cite{fptaylor15} provide a more precise bound of $721 \epsilon$, but the analysis time is 28 times slower than with our implementation. Formal verification of this bound inside the $\hol$ proof assistant takes $27.7$ seconds, which is 139 times slower than proof checking with $\realtofloat$ inside $\coq$. One can obtain an even more precise bound of $528 \epsilon$ (but 37 times slower than with our implementation) by turning on the improved rounding model of $\fptaylor$ and limiting the number of branch and bound iterations to 10000. The drawback of this bound is that it cannot be formally verified.
\item Finally, a sligthly coarser bound of $762 \epsilon$ is obtained with the $\rosa$ real compiler~\cite{Darulova14Popl}, but the analysis is 19 times slower than with our implementation and we cannot get formal verification of this bound.
\end{itemize}
%

\subsection{Related Works}
\label{sec:related}
%
SMT solvers allow analysis of programs with various semantics or specifications but are limited for the manipulation of problems involving nonlinear arithmetic. 
Several solvers, including {\sc Z3}~\cite{DeMoura08}, provide partial support for the IEEE floating-point standard~\cite{smtFPA2010}. They suffer from a lack of scalability when used for roundoff error analysis in isolation (as emphasized in~\cite{Darulova14Popl}), but can be integrated into existing frameworks, e.g.~{\sc FPhile}~\cite{PaganelliA13}. The procedure in~\cite{dReal13} can solve SMT problems over the real numbers, using interval constraint propagation, but has not yet been applied to quantification of roundoff error.

The $\rosa$ tool~\cite{Darulova14Popl} provides a way to compile functional {\sc Scala} programs involving semialgebraic functions and conditional statements.
The tool uses affine arithmetic to provide sound over-approximations of roundoff errors, allowing for generation of finite precision implementations which fulfill the required precision given as input by the user. This tool thus relies on abstract interpration but bounds of the affine expressions are provided through an optimization procedure based on SMT. In our case, we use the same rounding model but provide approximations which are affine w.r.t. the additional error variables and nonlinear w.r.t. the input variables. Instead of using SMT, we bound the resulting expressions with optimization techniques based on semidefinite programming.
%
Abstract interpretation~\cite{CousotCousot77} has been extensively used in the context of static analysis to provide sound over-approximations, called {\em abstractions}, of the sets of values taken by program variables. The effects of variable assignments, guards and conditional branching statements are handled with several domain specific operators (e.g. inclusion, meet and join). Well studied abstract domains include intervals~\cite{Moore62} as well as more complicated frameworks based on affine arithmetic~\cite{Stolfi03}, octogons~\cite{octogons}, zonotopes~\cite{Zonotope10}, polyhedra~\cite{polyhedra08}, interval polyhedra~\cite{IntervalPoly09}, some of them being implemented inside a tool called {\sc Apron}~\cite{Apron09}. Abstract domains provide sound over-approximations of program expressions, and allow upper bounds on roundoff error to be computed. 
The {\sc Gappa} tool~\cite{Daumas10} relies on interval abstract domains with an extension to affine domains~\cite{Linderman10}, to reason about roundoff errors.
As demonstrated in~\cite{fptaylor15}, the bounds obtained inside {\sc Gappa} are often coarser than other methods. Formal guarantees can be provided as {\sc Gappa} benefits from an interface with $\coq$ while making use of interval libraries~\cite{Melquiond201214} relying on formalized floating-points~\cite{BM11Flocq}. The static analysis commercial tool {\sc Fluctuat} (with a free academic version) relies on affine abstract domains~\cite{Blanchet03} and techniques which are very similar to the ones in $\rosa$, including interval subdivision. 
This tool does not perform optimization but uses forward computation to analyze floating-point programs written in C. 
Furthermore, {\sc Fluctuat} also has a procedure for discontinuity errors~\cite{Zonotope10}. The {\sc Gappa} and {\sc Fluctuat} tools use a different rounding model (also available as an option inside $\fptaylor$) based on a piecewise constant absolute error bound. This is more precise than the simple rounding model used in our framework but requires (possibly) extensive use of a branch and bound algorithm as each interval has to be subdivided in intervals $[2^n, 2^{n+1}]$ for several values of the integer $n$. 
In~\cite{fptaylor15}, the authors provide a table (Table~1) comparing relevant features of $\fptaylor$ with three other tools ($\rosa$, {\sc Gappa} and {\sc Fluctuat}), performing roundoff error estimation.
In a similar fashion, we summarize the main features related to our tool $\realtofloat$ and the same four above-mentioned tools used for our further benchmark comparisons w.r.t.~their expressiveness in Table~\ref{table:expressiveness}.

Computing sound bounds of nonlinear expressions is mandatory to perform formal analysis of finite precision implementations and can be performed with various optimization tools. 
In the polynomial case, alternative approaches to semidefinite relaxations are based on decomposition in the multivariate Bernstein basis. Formal verification of bounds obtained with this decomposition has been investigated by Mun\~oz and Narkawicz~\cite{MN13} in the PVS theorem prover. We are not aware of any work based on these techniques which can quantify roundoff errors. Another decomposition of nonnegative polynomials into SOS certificates consists in using the Krivine-Handelman~\cite{Krivine1964b,Handelman1988} representation and boils down to solving linear programming (LP) relaxations. In our case, we use a different representation, leading to solve SDP relaxations. The Krivine-Handelman representation has been used in~\cite{Boland10HGR} to compute roundoff error bounds. LP relaxations often provide coarser bounds than SDP relaxations and it has been proven in~\cite{lasserre2009moments} that generically finite convergence does not occur for convex problems, with the exception of the linear case. 
The work in ~\cite{Roux2015} focuses on formalization of roundoff errors bounds related to positive definiteness verification.
Branch and bound methods with Taylor models~\cite{Berz09} are not restricted to polynomial systems and have been formalized~\cite{SolovyevH13} to solve nonlinear inequalities occurring in the proof of Kepler Conjecture. Symbolic Taylor Expansions~\cite{fptaylor15} have been implemented in the $\fptaylor$ tool to compute formal bounds of roundoff errors for programs involving both polynomial and transcendental functions. 

\begin{table}[!t]
\begin{center}
\tbl{Comparison of roundoff error tools w.r.t.~expressiveness.\label{table:expressiveness}}{
\begin{tabular}{p{3.7cm}ccccc}
\hline
\multirow{1}{*}{Feature} & $\realtofloat$ & $\rosa$  & $\fptaylor$ & {\sc Gappa} & {\sc Fluctuat} \\
\hline
\multirow{1}{*}{{Basic FP operations/formats}} & $\surd$ & $\surd$ & $\surd$ & $\surd$ & $\surd$ \\
\multirow{1}{*}{{Special values ($\pm \infty$, NaN)}} &  &  &  & $\surd$ & $\surd$ \\
\multirow{1}{*}{{Improved rounding model}} &  & & $\surd$ & $\surd$ & $\surd$ \\
\multirow{1}{*}{{Input uncertainties}} & $\surd$ & $\surd$  & $\surd$ & $\surd$  & $\surd$ \\
\multirow{1}{*}{{Transcendental functions}} & $\surd$ &  & $\surd$  &  &  \\
\multirow{1}{*}{{Discontinuity errors}} & $\surd$ & $\surd$ &  &  & $\surd$ \\
\multirow{1}{*}{{Proof certificates}} & $\surd$ &  & $\surd$ & $\surd$  &  \\
\hline
\end{tabular}
}
\end{center}
\end{table}
\subsection{Contributions}
Our key contributions can be summarized as follows:
\begin{itemize}[noitemsep,nolistsep]
\item We present an optimization algorithm providing sound over-approximations for roundoff errors of floating-point nonlinear programs. 
This algorithm is based on sparse sums of squares programming~\cite{Las06SparseSOS}. In comparison with other methods, our algorithm allows us to obtain tighter upper bounds, while overcoming scalability and numerical issues inherent in SDP solvers~\cite{Todd01semidefiniteoptimization}. Our algorithm can currently handle  programs implementing polynomial functions, but also involving non-polynomial components, including either semialgebraic or transcendental operations (e.g. $/, \sqrtsign, \arctan, \exp$), as well as conditional statements.  Programs containing iterative or while loops are not currently supported.
\item Our framework is fully implemented in the $\realtofloat$ tool.  Among several features, the tool can optionally perform formal verification of roundoff error bounds for polynomial programs, inside the $\coq$ proof assistant~\cite{CoqProofAssistant}. The most recent software release of $\realtofloat$ provides $\ocaml$~\cite{OCaml} and $\coq$ libraries and is freely available.\footnote{\url{forge.ocamlcore.org/frs/?group_id=351}}
%
%
Our implementation tool is built on top of the $\nlcertify$ verification system~\cite{icms14}. Precision and efficiency of the tool are evaluated on several benchmarks coming from the existing literature. Numerical experiments demonstrate that our method competes well with recent approaches relying on Taylor-interval approximations~\cite{fptaylor15} or combining SMT solvers with affine arithmetic~\cite{Darulova14Popl}. We also compared our tool with {\sc Gappa}~\cite{Daumas10} and {\sc Fluctuat}~\cite{fluctuat}.
\end{itemize}
The paper is organized as follows.
In Section~\ref{sec:background}, we present mandatory background on roundoff errors due to finite precision arithmetic before describing our nonlinear program semantics (Section~\ref{sec:fpbackground}). Then we recall how to perform certified polynomial optimization based on semidefinite programming (Section~\ref{sec:sdpbackground}) and how to obtain formal bounds while checking the certificates inside the $\coq$ proof assistant (Section~\ref{sec:coqbackground}).
Section~\ref{sec:fpsdp} contains the main contribution of the paper, namely how to compute tight over-approximations for roundoff errors of nonlinear programs with sparse semidefinite relaxations.
Finally, Section~\ref{sec:benchs} is devoted to the evaluation of our nonlinear verification tool $\realtofloat$ on benchmarks arising from control systems, optimization, physics and biology, as well as comparisons with the tools $\fptaylor$, $\rosa$, {\sc Gappa} and {\sc Fluctuat}.
\section{PRELIMINARIES}
\label{sec:background}

\subsection{Program Semantics and Floating-point Numbers}
\label{sec:fpbackground}
%
We support conditional code without procedure calls or loops. Despite these restrictions, we can consider a wide range of nonlinear programs while assuming that  important numerical calculations can be expressed in a loop-free manner. 
Our programs are encoded in an ML-like language:
\begin{lstlisting}
let box_prog    $x_1 \dots x_n = [(a_1, b_1); \dots ; (a_n, b_n)]$;;
let obj_prog    $x_1 \dots x_n = [(f(\x), \epsilon_{\realtofloat})]$;;
let cstr_prog   $x_1 \dots x_n = [g_1 (\x); \dots; g_k(\x)]$;;
let uncert_prog $x_1 \dots x_n = [u_1; \dots; u_n]$;;
\end{lstlisting}
Here, the first line encodes interval floating-point bound constraints for input variables, namely $\x := (x_1, \dots, x_n) \in [a_1, b_1]\times \dots \times [a_n, b_n]$.
The second line provides the function $f(\x)$ as well as the total roundoff error bound $\epsilon_{\realtofloat}$.
Then, one encodes polynomial nonnegativity constraints over the input variables, namely $g_1(\x) \geq 0, \dots, g_k(\x) \geq 0$. Finally, the last line allows the user to specify a numerical constant $u_i$ to associate a given uncertainty to the variable $x_i$, for each $i= 1, \dots, n$.

The type of numerical constants is denoted by \code{C}. In our current implementation, the user can choose either 64 bit floating-point or arbitrary-size rational numbers. This type \code{C} is used for the terms $\epsilon_{\realtofloat}$, $u_1, \dots, u_n$, $a_1, \dots, a_n$, $b_1, \dots, b_n$.
The inductive type of polynomial expressions with coefficients in \code{C} is \code{pExprC} defined as follows:
\begin{lstlisting}
type pexprC = Pc of C | Px of positive 
| Psub of$\,$pexprC$\,$*$\,$pexprC | Pneg of pexprC 
| Padd of pexprC$\,$*$\,$pexprC 
| Pmul of pexprC$\,$*$\,$pexprC
\end{lstlisting}
The constructor \code{Px} takes a positive integer as argument to represent either an input or local variable.
The inductive type \code{nlexpr} of nonlinear expressions (such as $f(\x)$) is defined as follows:
\begin{lstlisting}
type nlexpr = 
| Pol of pexprC | Neg of nlexpr
| Add of nlexpr$\,$*$\,$nlexpr 
| Mul of nlexpr$\,$*$\,$nlexpr 
| Sub of nlexpr$\,$*$\,$nlexpr 
| Div of nlexpr$\,$*$\,$nlexpr | Sqrt of nlexpr 
| Transc of transc$\,$*$\,$nlexpr
| IfThenElse of pexprC$\,$*$\,$nlexpr$\,$*$\,$nlexpr
| Let of positive$\,$*$\,$nlexpr$\,$*$\,$nlexpr
\end{lstlisting}
The type \code{transc} corresponds to a {\em dictionary} $\setD$ of special functions. 
In our case  $\setD := \{\exp, \log, \cos, \sin, \tan, \arccos, \arcsin, \arctan \}$.
For instance, the term~\lstinline|Transc ($\exp$, $f(\x)$)| represents the program implementing $\exp(f(\x))$.

Given a polynomial expression $p$ and two nonlinear expressions $f$ and $g$, the term ~\lstinline|IfThenElse($p(\x)$, $f(\x)$, $g(\x)$)| represents the conditional program implementing the expression\footnote{Our general framework could theoretically handle nested if statements. However, our current implementation is limited to programs involving single top level conditional statements} \lstinline|if ($p(\x) \geq 0$) $f (\x)$ else $g (\x)$|.
The constructor \code{Let} allows us to define local variables in an ML fashion, e.g.~\lstinline|let $t_1 = 331.4 + 0.6 * T$ in $-t_1 * v /((t_1 + u) * (t_1 + u))$| (part of the \textit{doppler1} program considered in Section~\ref{sec:benchs}).
%
%

Finally, one obtains rounded nonlinear expressions using a recursive procedure~\lstinline|round|, defined according to Equation~\eqref{eq:roundbop} and Equation~\eqref{eq:roundtransc}. Rounded expressions are supported inside conditions. When an uncertainty $u_i$ is specified for an input variable $x_i$, the corresponding rounded expression is given by $x_i \, (1 + e)$, with $\mid e \mid \, \leq u_i$, the uncertainty $u_i$ being a relative error.

We adopt the standard practice~\cite{higham2002accuracy} to approximate a real number $x$ with its closest floating-point representation $\hat{x} = x (1 + e)$, with $|e|$ is less than the machine precision $\epsilon$. 
In the sequel, we neglect both overflow and denormal range values.
The operator $\hat{\cdot}$ is called the rounding operator and can be selected among rounding to nearest, rounding toward zero (resp.~$\pm\infty$). In the sequel, we assume rounding to nearest.
The scientific notation of a binary (resp.~decimal) floating-point number $\hat{x}$ is a triple $(s, sig, exp)$ consisting of a sign bit $s$, a {\em significand} $sig \in [1, 2)$ (resp.~$[1, 10)$) and an {\em exponent} $exp$, yielding numerical evaluation $(-1)^{s} \, sig \, 2^{exp}$ (resp.~$(-1)^{s} \, sig \, 10^{exp}$). 

The value of $\epsilon$ actually gives the upper bound on the relative floating-point error and is equal to $2^{-\prec}$, where $\prec$ is called the {\em precision}, referring to the number of significand bits used. For single precision floating-point, one has $\prec = 24$. For double (resp.~quadruple) precision, one has $\prec = 53$ (resp.~$\prec=113$). Let $\R$ denote the set of real numbers and $\F$ the set of binary floating-point numbers.

For each real-valued operation $\bop_\R \in \{+, -, \times, \slash \}$, the result of the corresponding floating-point operation $\bop_\F \in \{\oplus, \ominus, \otimes, \oslash \}$ satisfies the following when complying with IEEE 754 standard arithmetic~\cite{IEEE} (without overflow, underflow and denormal occurrences):
\begin{equation}
\label{eq:roundbop}
\bop_\F \, (\hat{x}, \hat{y}) = \bop_\R \, (\hat{x}, \hat{y}) \, (1 + e) \enspace, \quad \mid e \mid \leq \epsilon = 2^{-\prec} \enspace.
\end{equation}
Other operations include special functions taken from $\setD$, containing the unary functions
$\tan$, $\arctan$, $\cos$, $\arccos$, $\sin$, $\arcsin$, $\exp$, $\log$, $(\cdot)^{r}$ with $r\in \R\setminus\{0\}$. For $f_\R \in \setD$, the corresponding floating-point evaluation satisfies 
\begin{equation}
\label{eq:roundtransc}
f_\F (\hat{x}) = f_\R (\hat{x}) (1 + e) \enspace, \quad \mid e \mid \leq \epsilon (f_\R) \enspace.
\end{equation}
The value of the relative error bound $\epsilon (f_\R)$ differs from the machine precision $\epsilon$ in Equation~\eqref{eq:roundbop} and has to be properly adjusted on a per-operator basis. We refer the interested reader to~\cite{VerifCADTransc} for relative error bound verification of transcendental functions (see also~\cite{VerifHOLTransc} for formalization in $\hol$).
%


\subsection{SDP Relaxations for Polynomial Optimization}
\label{sec:sdpbackground}
The sums of squares method involves approximation of polynomial inequality constraints by sums of squares (SOS) equality constraints. Here we recall mandatory background about SOS. We apply this method in Section~\ref{sec:fpsdp} to solve the problems of Equation~\eqref{eq:roptim} when the nonlinear function $r$ is a polynomial. In the sequel, let us denote by $n$ the number of initial variables of the polynomial optimization problem and by $k$ the number of optimization constraints.
%
\subsubsection{Sums of squares certificates and SDP}
First we recall basic facts about generation of SOS certificates for polynomial optimization, using semidefinite programming, which can be found in texts such as~\cite{Lasserre01moments}.
Denote by $\R[\x]$ the vector space of polynomials and by $\R_{2 d}[\x]$ the restriction of $\R[\x]$ to polynomials of degree at most $2 d$. Let us define the set of SOS polynomials:
\begin{equation}
\label{eq:cone_sos}
\Sigma[\x] := \Bigl\{\sum_i q_i^2, \, \text{ with } q_i \in \R[\x] \Bigr\}\enspace,
\end{equation}
as well as its restriction $\Sigma_{2 d}[\x] := \Sigma[\x] \bigcap \R_{2 d}[\x]$ to polynomials of degree at most $2 d$. For instance, the following bivariate polynomial  $\sigma (\x) := 1 + (x_1^2 - x_2^2)^2$ lies in $\Sigma_4[\x] \subseteq \R_4[\x]$.

Optimization methods based on SOS use the implication $r \in \Sigma[\x] \implies \forall \x \in \R^n, \, r(\x) \geq 0$, i.e. the inclusion of $\Sigma[\x]$ in the set of nonnegative polynomials.

The underlying reason for using SOS polynomials is that optimizing over positive polynomials is NP Hard~\cite{laurent2009sums}. Thus, one would like to replace such positivity constraints by more tractable ones, and in particular the SOS decompositions admitted by positive polynomials provide a suitable alternative: when fixing the degree of such decompositions, the resulting relaxed problem becomes more tractable.

%
Given $r \in \R[\x]$, one considers the following polynomial minimization problem:
\begin{equation}
\label{eq:minpop}
r^*  :=  \inf_{\x \in \R^n} \, \{ \, r (\x) \, : \, \x \in \K \, \} \enspace,
\end{equation}
where the set of constraints $\K \subseteq \R^n$ is defined by
\[\K := \{ \x \in \R^{n} : g_1 (\x) \geq 0, \dots, g_k (\x) \geq 0\}\enspace,\]
for polynomial functions $g_1, \dots, g_k$. The set $\K$ is called a {\em basic semialgebraic} set. Membership of semialgebraic sets is ensured by satisfying conjunctions of polynomial nonnegativity constraints. 
\begin{remark}
\label{rk:arch}
 When the input variables satisfy interval constraints $\x \in [a_1, b_1] \times \dots \times [a_n, b_n]$ then one can easily show that there exists some integer $M > 0$ such that $M - \sum_{i=1}^n x_i^2 \geq 0$. 
In the sequel, we assume that this nonnegativity constraint appears explicitly in the definition of $\K$. Such an assumption is mandatory to prove the convergence of semidefinite relaxations recalled in Theorem~\ref{th:densesdp}.
\end{remark}
In general, the objective function $r$ and the set of constraints $\K$ can be nonconvex, which makes Problem~\eqref{eq:minpop} difficult to solve in practice. 
One can rewrite Problem~\eqref{eq:minpop} as the equivalent maximization problem:
\begin{equation}
\label{eq:maxpop}
r^*  :=  \sup_{\mu \in \R} \{ \, \mu \, : \, r (\x) - \mu \geq 0 \,, \ \forall \x \in \K \, \} \,.
\end{equation}
Now we outline how to handle the nonnegativity constraint $r - \mu \geq 0$.
Given a nonnegative polynomial $p \in \R[\x]$, the existence of an SOS decomposition $p = \sum_i q_i^2$ valid
over $\R^n$, is equivalent to the existence of a symmetric real matrix $\Q$, a solution of the following linear matrix feasibility problem:
\begin{align}
\label{eq:sdp}
r(\x) = \m_d(\x)^\intercal \, \Q \, \m_d(\x) \,, \quad \forall \x \in \R^n, \,
\end{align}
where $\m_d(\x) := (1, x_1, \dots, x_n, x_1^2,x_1 x_2,\dots, x_n^d)$ and the matrix $\Q$ has only nonnegative eigenvalues. Such a matrix $\Q$ is called {\em positive semidefinite}. The vector $\m_d$ (resp.~matrix $\Q$) has a size (resp.~dimension) equal to $s_n^d := \binom{n + d}{d}$. Problem~\eqref{eq:sdp} can be handled with semidefinite programming (SDP) solvers, such as {\sc Mosek}~\cite{mosek} or {\sc SDPA}~\cite{sdpa7} (see~\cite{Vandenberghe94SDP} for specific background about SDP). Then, one computes the ``LDL'' decomposition $\Q = \L^\intercal \D \L$ (a variant of the classical Cholesky decomposition), where $\L$ is a lower triangular matrix and $\D$ is a diagonal matrix. Finally, one obtains $r(\x) =  (\L \,
\m_d(\x))^\intercal \, \D \, (\L \, \m_d(\x)) = \sum_{i=0}^{s_n^d} q_i(\x)^2$. Such a decomposition is called a sums of squares (SOS) {\em certificate}.
\begin{example}
\label{ex:sdp}
Let us define $r(\x) := \frac{1}{4} + x_1^4 - 2 x_1^2 x_2^2 + x_2^4$. With $\m_2 (\x) = (1, x_1, x_2, x_1^2, x_1 x_2, x_2^2)$, one solves the linear matrix feasibility problem $r(\x) = \m_2 (\x)^\intercal \, \Q \, \m_2(\x)$. One can show that the solution writes $\Q = \L^\intercal \D \L$ for a $6 \times 6$ matrix $\L$ and a diagonal matrix $\D$ with entries $(\frac{1}{2},0,0,1,0,0)$, yielding the SOS decomposition: $r(\x) = (\frac{1}{2})^2 + (x_1^2 - x_2^2)^2$. This is enough to prove that $p$ is nonnegative.
\end{example}
\subsubsection{Dense SDP relaxations for polynomial optimization}
In order to solve our goal problem (Problem~\eqref{eq:roptim}), we are trying to solve Problem~\eqref{eq:minpop}, recast as
Problem~\eqref{eq:maxpop}. We first explain how to obtain tractable approximations of this difficult problem. Define $g_0 := 1$. The hierarchy of SDP relaxations developed by Lasserre \cite{Lasserre01moments} provides lower bounds of $r^*$, through solving the  optimization problems $(\P_d)$:
\[
(\P_d):\left\{			
\begin{array}{rlr}
p_d^\star := \sup\limits_{\sigma_j, \mu} & \mu \enspace, \\			 
\text{s.t.} &   r (\x) - \mu = \sum_{j = 0}^{k} \sigma_j(\x) g_j(\x) \,, \forall \x \,,\ \\
\\
& \mu\in \R \,, \sigma_j \in \Sigma[\x] \,, \quad \  \quad  \ j = 0,\dots,k \,, \\
\\
& \deg (\sigma_j g_j) \leq  2 d,             \quad \ \,  \qquad  j = 0,\dots,k \,.\\
\end{array} \right.
\]
One can solve $(\P_d)$ with SDP optimization to find a tuple $(\mu, \sigma_0, \dots, \sigma_k)$ which enables a proof that $g_1(\x) \geq 0 \wedge \dots  \wedge g_k(\x) \geq 0 \implies r (\x) - \mu \geq 0$.

The next theorem is a consequence of the assumption mentioned in Remark~\ref{rk:arch}.
\begin{theorem}[Lasserre~\cite{Lasserre01moments}]
\label{th:densesdp}
Let $p_d^{\star}$ be the optimal value of the SDP relaxation~$(\P_d)$.
Then, the sequence of optimal values $(p_d^\star)_{d \in \N}$ is nondecreasing and converges to $r^\star$.
\end{theorem}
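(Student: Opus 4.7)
The plan is to establish the three assertions of the theorem separately: feasibility of the relaxations yields $p_d^\star \leq r^\star$, monotonicity is immediate from the nesting of feasible sets, and convergence reduces to Putinar's Positivstellensatz via the archimedean condition from Remark~\ref{rk:arch}.

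First I would verify that $p_d^\star \leq r^\star$ for every $d$. If $(\mu,\sigma_0,\dots,\sigma_k)$ is feasible for $(\P_d)$, then $r(\x)-\mu = \sum_{j=0}^k \sigma_j(\x) g_j(\x)$ as an identity of polynomials. Evaluating at any $\x \in \K$, every $g_j(\x) \geq 0$ (with $g_0 \equiv 1$) and every $\sigma_j(\x) \geq 0$ as $\sigma_j \in \Sigma[\x]$, so $r(\x) - \mu \geq 0$. Thus $\mu \leq \inf_{\x \in \K} r(\x) = r^\star$, and taking the supremum gives $p_d^\star \leq r^\star$.

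Next, monotonicity: any feasible point for $(\P_d)$ is also feasible for $(\P_{d+1})$, because the degree cap $\deg(\sigma_j g_j)\leq 2d$ is relaxed to $2(d+1)$ while all other constraints are preserved. Consequently $p_d^\star \leq p_{d+1}^\star$, and combined with the previous step the sequence is nondecreasing and bounded above by $r^\star$, hence converges to some limit $p^\star \leq r^\star$.

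The main obstacle is the reverse inequality $p^\star \geq r^\star$, which is where the archimedean hypothesis plays its essential role. Fix $\epsilon > 0$ and consider the polynomial $r(\x) - (r^\star - \epsilon)$, which is strictly positive on $\K$. By Remark~\ref{rk:arch}, the constraint $M - \sum_{i=1}^n x_i^2 \geq 0$ belongs to the description of $\K$, so the quadratic module generated by $g_1,\dots,g_k$ is archimedean. Putinar's Positivstellensatz then yields SOS polynomials $\sigma_0,\dots,\sigma_k \in \Sigma[\x]$ such that
\begin{equation*}
r(\x) - (r^\star - \epsilon) = \sum_{j=0}^k \sigma_j(\x) g_j(\x) \quad \text{for all } \x \in \R^n.
\end{equation*}
Let $d_\epsilon$ be large enough that $\deg(\sigma_j g_j) \leq 2 d_\epsilon$ for every $j$. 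Then $(r^\star - \epsilon, \sigma_0, \dots, \sigma_k)$ is feasible for $(\P_{d_\epsilon})$, hence $p_{d_\epsilon}^\star \geq r^\star - \epsilon$, and a fortiori $p_d^\star \geq r^\star - \epsilon$ for every $d \geq d_\epsilon$. Since $\epsilon$ was arbitrary, $p^\star \geq r^\star$, which combined with the earlier bound gives $p^\star = r^\star$ and completes the proof.
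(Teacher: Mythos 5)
Your proof is correct and follows essentially the argument of the cited source: the paper itself does not reprove this result (it is quoted from Lasserre), and the standard proof is precisely your three steps — the weak bound $p_d^\star \leq r^\star$ from feasibility, monotonicity from the nesting of feasible sets, and convergence via Putinar's Positivstellensatz, which applies exactly because Remark~\ref{rk:arch} puts the ball constraint $M - \sum_{i=1}^n x_i^2 \geq 0$ into the description of $\K$ and thereby makes the quadratic module Archimedean. No gaps to report.
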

The number of SDP variables (i.e.~the number of variables of the semidefinite relaxation~$(\P_d)$) grows polynomially with the integer $d$, called the {\em relaxation order}.
Indeed, at a fixed number of variables $n$, the relaxation~$(\P_d)$ involves $O((2 d)^{n})$ SDP
variables and $(k + 1)$ linear matrix inequalities (LMIs) of size
$O(d^n)$. When $d$ increases, then more accurate lower bounds of $r^\star$ can be obtained, at an increasing computational cost.
At a fixed $d$,  the relaxation $(\P_d)$ involves $O(n^{2d})$ SDP variables and $(d + 1)$ linear matrix inequalities (LMIs) of size
$O(n^{d})$.
\begin{example}
\label{ex:pop}
Consider the polynomial $f$ mentioned in Section~\ref{sec:intro}:
$f(\x) := x_2 x_5 + x_3 x_6 - x_2 x_3  - x_5 x_6 
+ x_1 ( - x_1 +  x_2 +  x_3  - x_4 +  x_5 +  x_6)$ and the set $\K := [4, 6.36]^6$. The set $\K$ can be equivalently rewritten as:  
\[
\K := \{\, \x \in \R^n \, : \, g_1 (\x) \geq 0, \dots, g_{7} (\x) \geq 0 \,\} 
\,,\]
with $g_i(\x) := (6.36 - x_i) (x_i - 4)$ for each $i = 1, \dots, 6$ and $g_7(\x) : = 243 - \sum_{i=0}^6 x_i^2$. Here the constant $M = 243$ is chosen so that $M \geq 6 \times 6.36^2$ and the assumption in Remark~\ref{rk:arch} is fulfilled.
The number of initial variables of the optimization problem $r^* := \inf_{\x \in \R^n} \, \{ \, f (\x) \, : \, \x \in \K \, \}$ is $n = 6$ and the number of optimization constraints is $k= 7$.
For $d=1$, the dense SDP relaxation~$(\P_1)$ involves $\binom{n + 2 d}{2 d} = \binom{6 + 2}{2} = 28$ variables and provides a lower bound $p_1^\star = 20.755$ for $r^*$. The dense SDP relaxation~$(\P_2)$ involves $\binom{6 + 4}{4} = 210$ variables and provides a tighter lower bound of $p_2^\star = 20.8608$ for $r^*$. 
\end{example}
\if{
There are several ways to decrease the size of the SDP problems. 
First, symmetries in SDP relaxations for polynomial optimization problems can be exploited to replace one SDP problem~$(\P_d)$ by
several smaller SDPs~\cite{Riener2013SymmetricSDP}. Notice that it is possible only if the multivariate polynomials of the initial problem are invariant under the action of a finite subgroup $G$ of the group $GL_{n}(\R)$. 
}\fi
\subsubsection{Exploiting sparsity}
Here we recall how to exploit the structured sparsity of the
problem to replace one SDP problem~$(\P_d)$ by an SDP problem~$(\S_d)$ of
size $O (\kappa^ {2 d})$ where $\kappa$ is the average size
of the maximal cliques of the correlation sparsity pattern (csp) of the polynomial
variables (see~\cite{Waki06SparseSOS,Las06SparseSOS} for more details). We now present these notions as well as the formulation of sparse SDP relaxations~$(\S_d)$.

We denote by $\N^n$ the set of $n$-tuple of nonnegative integers. The support of a polynomial $r(\x) := \sum_{\alphab \in \N^n} r_{\alphab} \x^{\alphab}$ is defined as $\suppf{r} := \{ \, \alphab \in \N^n \, : \, r_{\alphab} \neq 0 \, \}$. For instance the support of $r(\x) := \frac{1}{4} + x_1^4 - 2 x_1^2 x_2^2 + x_2^4$ is $\suppf{p} = \{ \, (0,0), (4, 0), (2,2), (0,4) \, \}$.

Let $F_j$ be the index set of variables which are involved in the polynomial $g_j$, for each $j=1, \dots, k$.
The correlative sparsity is represented by the 
$n \times n$ correlation sparsity pattern matrix (csp matrix) $\mathbf{R}$ defined by:
\begin{equation*}
\label{eq:csp}
\mathbf{R}(i, j) := \left \{
\begin{array}{ll}
  1 & \text{ if }  i = j \enspace, \\
  1 & \text{ if }  \exists \alphab \in \suppf{f} \text{ such that } \alpha_i, \alpha_j \geq 1 \,, \\
  1 & \text{ if }  \exists l \in \{1, \dots, k\} \text{ such that } i, j \in F_l  \,,\\
  0 & \text{otherwise .} 
\end{array} \right.
\end{equation*}

We define the undirected csp graph $G(N, E)$ with
 $N = \{ 1, \dots, n \}$ and $E = \{\{i, j\} : i, j \in N , \ i < j , \mathbf{R}(i, j) = 1 \}$. 
Then, let $C_1,\dots, C_m \subseteq N$ denote the maximal cliques of $G(N, E)$ and 
 define $n_j := \#C_j$, for each $j=1 ,\dots,m$.

\begin{remark}
\label{rk:sparsearch}
Assuming that the set $\K$ is as in Remark~\ref{rk:arch}, one replaces the constraint $M - \sum_{i=1}^n x_i^2 \geq 0$ by the $m$ redundant additional constraints:
\begin{equation}
\label{eq:assum_sos_sparse}
g_{k + j} := n_j M^2 - \sum_{i \in C_j} {x_i^2} \geq 0\,, \  j=1 ,\dots, m \,,
\end{equation}
set $k' = k + m$, define the compact semialgebraic set:
\[\K' := \{\, \x \in \R^n \, : \, g_1 (\x) \geq 0, \dots, g_{k'} (\x) \geq 0 \,\} \,,\]
and modify Problem~\eqref{eq:minpop} into the following optimization problem:
\begin{equation}
\label{eq:sparseminpop}
r^*  :=  \inf_{\x \in \R^n} \, \{ \, r (\x) \, : \, \x \in \K' \, \} \,.
\end{equation}
\end{remark}
For each $j=1 ,\dots,m$, we note $\R_{2 d}[\x, C_j]$ the set of polynomials of $\R_{2 d}[\x]$ which involve the variables $(x_i)_{i \in C_j}$. We denote $\Sigma [\x, C_j] := \Sigma [\x] \bigcap \R_{2 d}[\x, C_j]$. Similarly, we define $\Sigma [\x, F_j]$, for each $j=1, \dots, k'$.
The following program is the sparse variant of the SDP program $(\P_d)$:
\[
(\S_d):\left\{			
\begin{array}{rl}
r_d^\star := \sup\limits_{\mu, \sigma_j} & \mu\enspace, \\	 
\text{s.t.} & r (\x) - \mu = \sum_{j = 0}^{k'} \sigma_j(\x) g_j(\x) \,, \ \forall \x \,, \\
\\
& \mu\in \R \,,\  \sigma_0 \in \sum_{j = 1}^m \Sigma [\x, C_j] \,, \\
\\
& \sigma_j \in \Sigma[\x, F_j]  \,,\ j = 1,\dots,k' \,, \\
\\
& \deg (\sigma_j g_j) \leq 2 d  \,,\ j = 0,\dots,k' \,,
\end{array} \right.
\]
where $\sigma_0 \in \sum_{j = 1}^m \Sigma [\x, C_j]$ if and only if there exist $\sigma^1 \in \Sigma[\x, C_1], \dots, \sigma^m \in \Sigma[\x,C_m]$ such that $\sigma_0 (\x) = \sum_{j = 1}^m \sigma^j (\x)$, for all $\x \in \R^n$.

The number of SDP variables of the relaxation~$(\S_d)$ is $\sum_{j=1}^m \binom{n_j + 2 d}{2 d}$. At fixed $d$, it yields an SDP problem with $O(\kappa^{2d})$ variables, where $\kappa := \frac{1}{m} \sum_{j=1}^m n_j$ is the average size of the cliques $C_1, \dots, C_m$.
Moreover, the cliques $C_1, \dots, C_m$ satisfy the running intersection property: 
\begin{definition}[RIP]
\label{def:rip}
Let $m \in \N_0$  and $I_1, \dots, I_m$ be subsets of $\{1, \dots, n\}$. We say that $I_1, \dots, I_m$ satisfy the running intersection property (RIP) when for all $i=1, \dots, m$, there exists an integer $l < i$ such that $I_i \cap (\cup_{j < i} I_j) \subseteq I_l$.
\end{definition}
This RIP property together with the assumption mentioned in Remark~\ref{rk:sparsearch} allow us to state the sparse variant of Theorem~\ref{th:densesdp}:
%
\begin{theorem}[\protect{Lasserre~\cite[Theorem 3.6]{Las06SparseSOS}}]
\label{th:sparsesdp}
Let $r_d^{\star}$ be the optimal value of the sparse SDP relaxation~$(\S_d)$. Then the sequence $(r_d^{\star})_{d \in \N}$ is nondecreasing and converges to $r^\star$.
\end{theorem}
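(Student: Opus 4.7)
The plan is to follow the same strategy as for the dense Theorem~\ref{th:densesdp}: establish monotonicity and the easy direction $r_d^\star \leq r^\star$ directly from feasibility, and then obtain convergence by exhibiting, for each $\epsilon>0$, a sparse SOS-weighted representation of $r - (r^\star - \epsilon)$ of bounded degree. The new ingredient, compared with the dense case, is that the representation must respect the variable scopes prescribed by the cliques $C_1,\dots,C_m$ and the index sets $F_1,\dots,F_{k'}$, which is precisely what a sparse Positivstellensatz delivers under the running intersection property of Definition~\ref{def:rip} together with the per-clique archimedean constraints added in Remark~\ref{rk:sparsearch}.

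For monotonicity, I would simply observe that any feasible tuple $(\mu,\sigma_0,\sigma_1,\dots,\sigma_{k'})$ for $(\S_d)$ remains feasible for $(\S_{d+1})$, since the degree bound only gets relaxed; hence $r_d^\star \leq r_{d+1}^\star$. For the upper bound, evaluating the SOS identity $r(\x) - \mu = \sigma_0(\x) + \sum_{j=1}^{k'} \sigma_j(\x) g_j(\x)$ at any $\x \in \K'$ and using $\sigma_0(\x), \sigma_j(\x) \geq 0$ together with $g_j(\x)\geq 0$ yields $r(\x) \geq \mu$, so $\mu \leq r^\star$ and hence $r_d^\star \leq r^\star$ for every $d$.

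The heart of the argument is the reverse inequality $\sup_d r_d^\star \geq r^\star$. Fix $\epsilon>0$ and set $p_\epsilon := r - (r^\star - \epsilon)$, which is strictly positive on the compact set $\K'$. The key step I would invoke is the sparse Putinar-type representation of Lasserre: under RIP and the per-clique archimedean bounds $g_{k+j} = n_j M^2 - \sum_{i\in C_j} x_i^2 \geq 0$ imposed in Remark~\ref{rk:sparsearch}, every polynomial strictly positive on $\K'$ admits a decomposition $p_\epsilon = \sum_{j=1}^m \sigma^j + \sum_{j=1}^{k'} \sigma_j g_j$ with $\sigma^j \in \Sigma[\x,C_j]$ and $\sigma_j \in \Sigma[\x,F_j]$. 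Taking $\sigma_0 := \sum_{j=1}^m \sigma^j$ and $\mu := r^\star - \epsilon$ then furnishes a feasible point of $(\S_d)$ for any $d$ large enough to majorise the total degree of this certificate, so $r_d^\star \geq r^\star - \epsilon$; letting $\epsilon \downarrow 0$ gives convergence.

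The main obstacle is the sparse Positivstellensatz itself. Its standard proof proceeds by induction on the number $m$ of cliques: using RIP one isolates a leaf clique $C_m$ whose intersection with the union of the previous cliques sits inside some earlier $C_l$, applies classical Putinar on the variables indexed by $C_m$ (which is legitimate thanks to the archimedean constraint attached to $C_m$), and then splits the resulting decomposition into a piece localised on $C_m$ and a remainder that can be absorbed into the inductive hypothesis through $C_l$. Carrying out this bookkeeping while keeping each multiplier in the correct $\Sigma[\x,C_j]$ or $\Sigma[\x,F_j]$ is the delicate combinatorial-algebraic step, and I would cite this result from~\cite{Las06SparseSOS} rather than reprove it.
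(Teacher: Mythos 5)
The paper does not actually prove this theorem: it is imported verbatim from Lasserre~\cite[Theorem 3.6]{Las06SparseSOS}, so there is no internal argument to compare against, and your choice to cite the sparse Positivstellensatz rather than reprove it leaves you at the same level of deferral as the paper itself. Your surrounding scaffolding is the standard argument and is correct: monotonicity because the degree bound is only relaxed when passing from $(\S_d)$ to $(\S_{d+1})$, the bound $r_d^\star \leq r^\star$ by evaluating the certificate identity at points of the feasible set, and convergence by applying the sparse representation result to $r - (r^\star - \epsilon)$ and letting $\epsilon \downarrow 0$.

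One statement you make is too strong and, read literally, false: under RIP and the per-clique archimedean constraints it is \emph{not} true that \emph{every} polynomial strictly positive on $\K'$ admits a clique-localized certificate $\sum_{j=1}^m \sigma^j + \sum_{j=1}^{k'} \sigma_j g_j$ with $\sigma^j \in \Sigma[\x, C_j]$ and $\sigma_j \in \Sigma[\x, F_j]$. The sparse Positivstellensatz applies only to polynomials that themselves respect the sparsity pattern, i.e.\ that can be written as a sum of polynomials each involving only the variables of a single clique; a strictly positive polynomial coupling variables from two different cliques (one with a monomial $x_i x_j$ where no clique contains both $i$ and $j$) need not have such a representation. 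Your application nevertheless goes through, because the cliques $C_1, \dots, C_m$ are by construction the maximal cliques of the csp graph of $r$, so $r$ decomposes clique-locally, and the constant $r^\star - \epsilon$ can be absorbed into any one summand. You should state this hypothesis explicitly when invoking the representation theorem: it is precisely the reason the cliques must be derived from the correlative sparsity pattern of the objective and constraints, rather than from an arbitrary cover of the variable indices.
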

The interested reader can find more details in~\cite{Waki06SparseSOS} about additional ways to exploit sparsity in order to derive analogous sparse SDP relaxations.
We illustrate the benefits of the SDP relaxations~$(\S_d)$ with the following example:
\begin{example}
\label{ex:sparse}
Consider the polynomial $f$ mentioned in Section~\ref{sec:intro}:
$f(\x) := x_2 x_5 + x_3 x_6 - x_2 x_3  - x_5 x_6 
+ x_1 ( - x_1 +  x_2 +  x_3  - x_4 +  x_5 +  x_6)$.
Here, $n = 6, d = 2, N = \{1,\dots, 6 \}$. The $6 \times 6$ correlative sparsity matrix $\mathbf{R}$ is:
\[
\mathbf{R} = 
\begin{pmatrix}
  1 & 1 & 1 & 1 & 1 & 1 \\
  1 & 1 & 1 & 0 & 1 & 0 \\
  1 & 1 & 1 & 0 & 0 & 1 \\
  1 & 0 & 0 & 1 & 0 & 0 \\
  1 & 1 & 0 & 0 & 1 & 1 \\
  1 & 0 & 1 & 0 & 1 & 1 
 \end{pmatrix}
\]
The csp graph $G$ associated to $\mathbf{R}$ is depicted in Figure~\ref{fig:csp_deltax}. 
\begin{figure}[!t]	
\begin{center}
\includegraphics[scale=0.6]{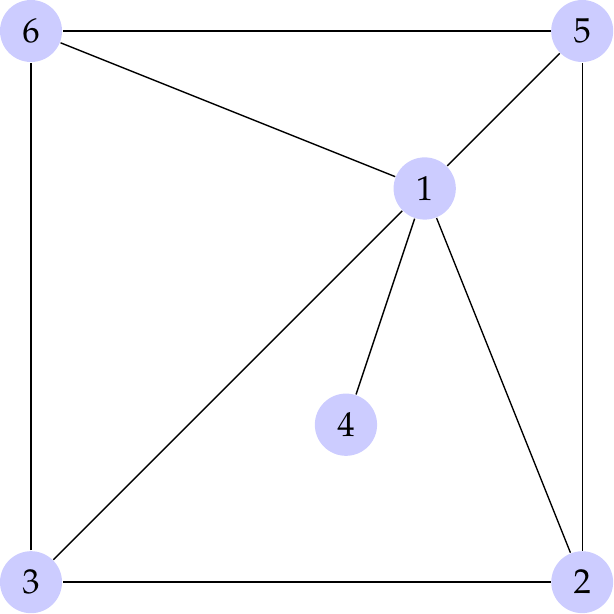}
\caption{Correlative sparsity pattern graph for the variables of $f$ from Example~\ref{ex:sparse}.}
\label{fig:csp_deltax}
\end{center}
\end{figure}
The maximal cliques of $G$ are $C_1 := \{1, 4\}$, $C_2 := \{1, 2, 3\}$, $C_3 := \{1, 2, 5\}$, $C_4 := \{1, 5, 6\}$ and $C_5 := \{1, 3, 6\}$. For $d=2$, the dense SDP relaxation~$(\P_2)$ involves $\binom{6 + 4}{4} = 210$ variables against $\binom{2 + 4}{4} + 4 \binom{3 + 4}{4} = 155$ for the sparse variant~$(\S_2)$. The dense SDP relaxation~$(\P_3)$ involves $924$ variables against $364$ for the sparse variant~$(\S_3)$. 
This difference becomes significant while considering that the time complexity of semidefinite programming is polynomial w.r.t. the number of variables with an exponent greater than 3 (see~\cite[Chapter 4]{BenTal01} for more details).
\end{example}
\subsection{Computer Proofs for Polynomial Optimization}
\label{sec:coqbackground}
Here, we briefly recall some existing features of the $\coq$ proof assistant to handle formal polynomial optimization, when using SDP relaxations.
The advantage of such relaxations is that they provide SOS certificates, which can be formally checked \textit{a posteriori}.
For more details on $\coq$, we recommend the
documentation available in~\cite{bertot2004interactive}.
Given a polynomial $r$ and a set of constraints $\K$, one can obtain a lower bound on $r$ by solving any instance of Problem~$(\P_d)$. Then, one can verify formally the correctness of the lower bound $r_d^\star$, using the SOS certificate output $\sigma_0, \dots, \sigma_k$. Indeed it is enough to prove the polynomial equality $r(\x) - r_d^\star = \sum_{j=0}^k \sigma_j(\x) g_j(\x)$ inside $\coq$. Such equalities can be efficiently proved using $\coq$'s ring tactic~\cite{ring05} via the mechanism of computational reflection~\cite{Boutin97usingreflection}. Any polynomial of type \code{pexprC} (see Section~\ref{sec:fpbackground}) can be normalized to a unique polynomial of type \code{polC} (see~\cite{ring05} for more details on the constructors of this type).
For the sake of clarity, let us consider the unconstrained case, i.e.~$\K = \R^n$. One encodes an SOS certificate $\sigma_0(\x) = \sum_{i=1}^m q_i^2$  with the sequence of polynomials $[q_1; \dots; q_m]$, each $q_i$ being of type \code{polC} . To prove the equality $r = \sigma_0$, our version of the ring tactic normalizes both $r$ and the sequence $[q_1; \dots; q_m]$ and compares the two normalization results. This mechanism is illustrated in Figure~\ref{fig:reflexion} with the polynomial $r(\x) := \frac{1}{4} + x_1^4 - 2 x_1^2 x_2^2 + x_2^4$ (see Example~\ref{ex:sdp}) being encoded by \code{r}  and the polynomials $1/2$ and $x_1^2 - x_2^2$ being encoded respectively by $\mathtt{q_1}$ and $\mathtt{q_2}$. 

\if{
In the general case, one applies a correctness lemma:
\begin{lstlisting}
Lemma correct_pop env $r$ cert_pop $ $:  
g_nonneg env g $\to$ checker g $\fpop$ $\mu_k^-$ cert_pop  $\eqcoq$ true $\to$ 
$\mu_k^-$ $\leq$ $\evalexpr{\fpop}$.
\end{lstlisting}
}\fi
\begin{figure}[!ht]
\centering
\includegraphics[scale=0.75]{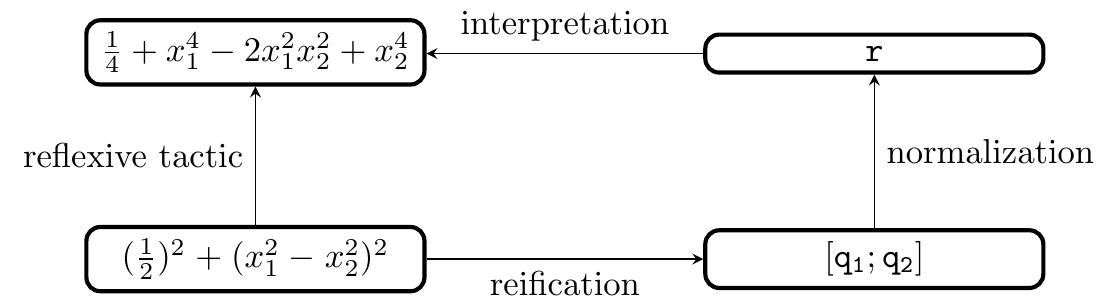}
\caption{An illustration of computational reflection.}	
\label{fig:reflexion}
\end{figure}
In the general case, this computational step is done through a \code{checker_sos} procedure which returns a Boolean value. If this value is true, one applies a correctness lemma, whose conclusion yields the nonnegativity of $r - r_d^\star$ over $\K$.
In practice, the SDP solvers are implemented in floating-point arithmetic, thus the above equality between $r - r_d^\star$ and the SOS certificate does not hold. However, following Remark~\ref{rk:arch}, each variable lies in a closed interval, thus one can bound the remainder polynomial $\epsilon(\x) := r(\x) - r_d^\star - \sum_{j=0}^k \sigma_j(\x) g_j(\x)$ using basic interval arithmetic, so that the lower bound $\epsilon^\star$ of $\epsilon$ yields the valid inequality: $\forall \x \in \K, r(\x) \geq r_d^\star + \epsilon^\star$.
For more explanation, we refer the interested reader to the formal framework~\cite[Section~2.3]{jfr14}. Note that this formal verification remains valid when considering the sparse variant~$(\S_d)$.
%
\section{GUARANTEED ROUNDOFF ERROR BOUNDS USING SDP RELAXATIONS}
\label{sec:fpsdp}
In this section, we present our new algorithm, relying on sparse SDP relaxations, to bound roundoff errors of nonlinear programs. After stating our general algorithm (Section~\ref{sec:transcsdp}), we detail how this procedure can handle polynomial programs (Section~\ref{sec:polsdp}). Extensions to the non-polynomial case, including conditional statements, are presented in Section~\ref{sec:nonpolsdp}. 
\subsection{The General Optimization Framework}
\label{sec:transcsdp}
Here we consider a given program that implements a nonlinear transcendental expression $f$ with input variables $\x$ satisfying a set of constraints $\X$. We assume that  $\X$ is included in a box (i.e.~a product of closed intervals) $[\a, \b] := [a_1, b_1] \times \dots \times [a_n, b_n]$ and that $\X$ is encoded as follows: 
\[ 
\X := \{\, \x \in \R^n \, : \, g_1 (\x) \geq 0, \dots, g_{k} (\x) \geq 0 \,\} \,,
\]
for polynomial functions $g_1, \dots, g_k$. 
Then, we denote by $\hat{f}(\x,\e)$ the rounded expression of $f$ after applying the ~\lstinline|round| procedure (see Section~\ref{sec:fpbackground}), introducing additional error variables $\e$.

The algorithm \code{bound}, depicted in Figure~\ref{alg:bound}, takes as input $\x$, $\X$, $f$, $\hat{f}$, $\e$ as well as the set $\E$ of bound constraints over $\e$. Here we assume that our program implementing $f$ does not involve conditional statements (this case will be discussed later in Section~\ref{sec:nonpolsdp}). For a given machine $\epsilon$, one has $\E := [-\epsilon, \epsilon]^m$, with $m$ being the number of error variables. This algorithm actually relies on the sparse SDP optimization procedure $(\S_d)$ (see Section~\ref{sec:sdpbackground} for more details), thus~\code{bound} also takes as input a relaxation order $d \in \N$. The algorithm provides as output an interval enclosure $I_d$ of the error $ \hat{f}(\x,\e) - f(\x)$ over $\K$. 
From this interval $I_d:= [\underline{f_d}, \overline{f_d}]$, one can compute $f_d := \max \{- \underline{f_d}, \overline{f_d} \}$, which is a sound upper bound of the maximal absolute error $r^\star := \max_{(\x,\e)\in \K} \mid \hat{f}(\x,\e) - f(\x) \mid $. 

\begin{figure}[!t]
\begin{algorithmic}[1]                    
\Require input variables $\x$, input constraints $\X$, nonlinear expression $f$, rounded expression $\hat{f}$, error variables $\e$, error constraints $\E$, relaxation order $d$
\Ensure interval enclosure $I_d$ of the error $\hat{f} - f$ over $\K := \X \times \E$
\State Define the absolute error $r(\x, \e) := \hat{f}(\x,\e) - f(\x)$ \label{line:r}

\State Compute $l(\x,\e) := r(\x, 0) + \sum_{j=1}^m \frac{\partial r(\x,\e)} {\partial e_j} (\x,0) \, e_j$ \label{line:l}

\State Define $h := r - l$ \label{line:h}

\State Compute bounds for $h$: $I^h := \iaboundfun{h}{\K}$ \label{line:iabound}
\State Compute bounds for $l$: $I_d^l := \sdpboundfun{l}{\K}{d}$  \label{line:sdpbound}
\State \Return $I_d := I_d^l + I^h$ 
\end{algorithmic}
\caption{\code{bound}: our algorithm to compute roundoff errors bounds of nonlinear programs.}
\label{alg:bound}
\end{figure}

After defining the absolute roundoff error $r := \hat{f} - f$ (Line~\lineref{line:r}), one decomposes $r$ as the sum of an expression $l$ which is affine w.r.t.~the error variable $\e$ and a remainder $h$. One way to obtain $l$ is to compute the vector of partial derivatives of $r$ w.r.t.~$\e$ evaluated at $(\x, 0)$ and finally to take the inner product of this vector and $\e$ (Line~\lineref{line:l}). 
Then, the idea is to compute a precise bound of $l$ and a coarse bound of $h$. 
The underlying reason is that $h$ involves error term products of degree greater than 2 (e.g.~$e_1 e_2$), yielding an interval enclosure $I^h$ of \textit{a priori} much smaller width, compared to the interval enclosure $I^l$ of $l$. 
%
%
One obtains $I^h$ using the procedure $\iabound$ implementing basic interval arithmetic (Line~\lineref{line:iabound}) to bound the remainder of the multivariate Taylor expansion of $r$ w.r.t.~$\e$, expressed as a combination of the second-error derivatives (similar as in~\cite{fptaylor15}).
The main algorithm presented in Figure~\ref{alg:bound} is very similar to the algorithm of $\fptaylor$~\cite{fptaylor15}, except that SDP based techniques are used instead of the global optimization procedure from~\cite{fptaylor15}. Note that overflow and denormal are neglected here but one could handle them, as in~\cite{fptaylor15}, by adding additional error variables and discarding the related terms using naive interval arithmetic.

%
\subsection{Polynomial Programs}
\label{sec:polsdp}
We first describe our $\sdpbound$ optimization algorithm when implementing polynomial programs. In this case, $\sdpbound$ calls an auxiliary procedure $\sdppoly$.
The bound of $l$ is provided through solving two sparse SDP instances of Problem~$(\S_d)$, at relaxation order $d$. We now give more explanation about the $\sdppoly$ procedure.

We can map each input variable $x_i$ to the integer $i$, for all $i=1,\dots,n$, as well as each error variable $e_j$ to $n+j$, for all $j=1,\dots,m$. Then, define the sets $C_1 := \{1,\dots,n,n+1\}, \dots, C_m := \{1,\dots,n,n+m\}$. Here, we take advantage of the correlation sparsity pattern of $l$ by using $m$ distinct sets of cardinality $n+1$ rather than a single one of cardinality $n+m$, i.e.~the total number of variables. 
After writing $l(\x,\e) = r(\x, 0) + \sum_{j=1}^m \frac{\partial r(\x,\e)} {\partial e_j} (\x,0) \, e_j$ and noticing that $r(\x,0) = \hat{f}(\x,0) - f(\x) = 0$, one can scale the optimization problems by writing 
\begin{align}
\label{eq:lscale}
l(\x,\e) = \sum_{j=1}^m s_j (\x) e_j = \epsilon \sum_{j=1}^m s_j (\x) \frac{e_j}{\epsilon} \,,
\end{align}
with $s_j(\x) := \frac{\partial r(\x,\e)} {\partial e_j} (\x,0)$, for all $j=1,\dots,m$. Replacing $\e$ by $\e/\epsilon$ leads to computing an interval enclosure of $l/\epsilon$ over $\K' := \X \times [-1, 1]^m$.
Recall that from Remark~\ref{rk:arch}, there exists an integer $M > 0$ such that $M - \sum_{i=1}^n x_i^2 \geq 0$, as the input variables satisfy box constraints.
Moreover, to fulfil the assumption of Remark~\ref{rk:sparsearch},  one encodes $\K'$ as follows: 
\begin{align*}
\K' := \{\, (\x,\e) \in \R^{n+m} \, : \, g_1 (\x) \geq 0, \dots, g_k(\x) \geq 0 \,, \\
g_{k+1}(\x,e_1) \geq 0, \dots, g_{k+m} (\x, e_m) \geq 0 \,\} \,,
\end{align*}
with $g_{k+j}(\x, e_j) := M + 1 -  \sum_{i=1}^n x_i^2 - e_j^2$, for all $j=1,\dots, m$. 
The index set of variables involved in $g_j$ is $F_j := N = \{1, \dots, n\}$ for all $j=1, \dots, k$. 
The index set of variables involved in $g_{k+j}$ is $F_{k+j} := C_j$ for all $j=1, \dots, m$. 

Then, one can compute a lower bound of the minimum of $l'(\x,\e) := l(\x, \e) / \epsilon = \sum_{j=1}^m s_j (\x) e_j$ over $\K'$ by solving the following optimization problem:
\if{
\begin{align*}
\label{eq:lscalesdp1}
\begin{split}
\left\{			
\begin{array}{rl}
\underline{l_d'} := \sup\limits_{\mu, \sigma_j} & \mu\enspace, \\	 
\text{s.t.} & l' - \mu = \sigma_0 + \sum_{j = 1}^{k+m} \sigma_j g_j \,, \\ 
\\
& \mu\in \R \,,\ \sigma_0 \in \sum_{j = 1}^m \Sigma [(\x, \e), C_j] \,, \\
\\
& \sigma_j \in \Sigma[(\x,\e), F_j] \,, \ j = 1,\dots,k+m \,, \\
\\
& \deg (\sigma_j g_j) \leq 2 d  \,, \ j = 1,\dots,k+m \,.
\end{array} \right.
\end{split}
\end{align*}
}\fi
\begin{align}
\label{eq:lscalesdp1}	
\begin{array}{rl}
\underline{l_d'} := \sup\limits_{\mu, \sigma_j} & \mu\enspace,  \\	 
\text{s.t.} & l' - \mu = \sigma_0 + \sum_{j = 1}^{k+m} \sigma_j g_j \,,  \\ 
& \mu\in \R \,,\ \sigma_0 \in \sum_{j = 1}^m \Sigma [(\x, \e), C_j] \,, \\
& \sigma_j \in \Sigma[(\x,\e), F_j] \,, \ j = 1,\dots,k+m \,, \\
& \deg (\sigma_j g_j) \leq 2 d  \,, \ j = 1,\dots,k+m \,. 
\end{array} 
\end{align}
A feasible solution of Problem~\eqref{eq:lscalesdp1} ensures the existence of $\sigma^1 \in \Sigma[(\x,e_1)], \dots, \sigma^m \in \Sigma[(\x,e_m)]$ such that $\sigma_0 = \sum_{j=0}^m \sigma^j$, allowing the following reformulation:
\begin{align}
\begin{split}
\label{eq:lscalesdp2}			
\begin{array}{rl}
\underline{l_d'} := \sup\limits_{\mu, \sigma_j} & \mu\enspace, \\	
\text{s.t.} & l' - \mu = \sum_{j=1}^m \sigma^j + \sum_{j = 1}^{k+m} \sigma_j g_j \,, \\
& \mu\in \R \,, \ \sigma_j \in \Sigma[\x] \,, \ j = 1,\dots,m \,, \\
& \sigma^j  \in \Sigma [(\x, e_j)] \,,  \deg (\sigma^j) \leq 2 d  \,, \ j = 1,\dots,m \,, \\
&  \quad \deg (\sigma_j g_j) \leq 2 d  \,, \ j = 1,\dots,k+m \,.
\end{array} 
\end{split}
\end{align}
An upper bound $\overline{l_d'}$ can be obtained by replacing $\sup$ with $\inf$ and $l' - \mu$ by $\mu - l'$ in Problem~\eqref{eq:lscalesdp2}.
Our optimization procedure $\sdppoly$ computes the lower bound $\underline{l_d'}$ as well as an upper bound $\overline{l_d'}$ of $l'$ over $\K'$ then returns the interval $I_d^l := [\epsilon \, \underline{l_d'}, \epsilon \, \overline{l_d'}] $, which is a sound enclosure of the values of $l$ over $\K$.
%

We emphasize two advantages of the decomposition $r := l + h$ and more precisely of the linear dependency of $l$ w.r.t.~$\e$: scalability and robustness to SDP numerical issues.
First, no computation is required to determine the correlation sparsity pattern of $l$, by comparison to the general case. Thus, it becomes much easier to handle the optimization of $l$ with the sparse SDP Problem~\eqref{eq:lscalesdp2} rather than with the corresponding instance of the dense relaxation~$(\P_d)$. While the latter involves $\binom{n + m+ 2 d}{2 d}$ SDP variables, the former involves only $m \, \binom{n + 1 + 2 d}{2 d}$ variables, ensuring the scalability of our framework.
In addition, the linear dependency of $l$ w.r.t.~$\e$ allows us to scale the error variables and optimize over a set of variables lying in $\K' := \X \times [-1, 1]^m$. It ensures that the range of input variables does not significantly differ from the range of error variables. This condition is mandatory while considering SDP relaxations because most SDP solvers (e.g.~{\sc Mosek}~\cite{mosek}) are implemented using double precision floating-point. It is impossible to optimize $l$ over $\K$ (rather than $l'$ over $\K'$) when the maximal value $\epsilon$ of error variables is less than $2^{-53}$, due to the fact that SDP solvers would treat each error variable term as 0, and consequently $l$ as the zero polynomial. Thus, this decomposition insures our framework against numerical issues related to finite-precision implementation of SDP solvers.
%

Let us define the interval enclosure $I^l := [\underline{l}, \overline{l}]$, with $\underline{l} := \inf_{(\x,\e) \in \K} l(\x,\e)$ and $\overline{l} := \sup_{(\x,\e) \in \K} l(\x,\e)$.
The next lemma states that one can approximate $I^l$ as closely as desired using the $\sdppoly$ procedure.
\begin{lemma}[Convergence of the $\sdppoly$ procedure]
\label{th:cvg_sdppoly}
Let $I_d^l$ be the interval enclosure returned by the procedure $\sdppolyfun{l}{\K}{d}$. The sequence $(I_d^l)_{d \in \N}$ converges to $I^l$.
\end{lemma}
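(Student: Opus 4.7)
The plan is to reduce the claim to two applications of Theorem~\ref{th:sparsesdp} (Lasserre's sparse SOS convergence), one for the lower endpoint and one for the upper endpoint, and then conclude by the linearity of the rescaling by~$\epsilon$.

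First I would write $I^l = [\underline{l}, \overline{l}]$ and observe that the change of variables $\e \mapsto \e/\epsilon$ used in Equation~\eqref{eq:lscale} is a bijection from $\K = \X \times [-\epsilon,\epsilon]^m$ onto $\K' = \X \times [-1,1]^m$, so that $\underline{l} = \epsilon \, \underline{l'}$ and $\overline{l} = \epsilon \, \overline{l'}$ where $\underline{l'} := \inf_{\K'} l'$ and $\overline{l'} := \sup_{\K'} l'$. Hence it suffices to show that $\underline{l_d'} \to \underline{l'}$ and $\overline{l_d'} \to \overline{l'}$ as $d \to \infty$, since $I_d^l = [\epsilon\,\underline{l_d'}, \epsilon\,\overline{l_d'}]$.

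Next I would verify the two hypotheses of Theorem~\ref{th:sparsesdp} for the problem encoded by the constraints $g_1,\dots,g_{k+m}$ defining $\K'$. For the archimedean-type assumption of Remark~\ref{rk:sparsearch}, the redundant constraints $g_{k+j}(\x,e_j) = M+1 - \sum_{i=1}^n x_i^2 - e_j^2 \geq 0$ have been introduced precisely so that each clique carries a ball constraint; they are implied on $\K'$ by $M - \sum_i x_i^2 \geq 0$ (Remark~\ref{rk:arch}) and $e_j^2 \leq 1$. For the running intersection property, the cliques $C_j = \{1,\dots,n,n+j\}$ ($j=1,\dots,m$) satisfy, for every $i > 1$,
\[
C_i \cap \bigcup_{j<i} C_j \;=\; \{1,\dots,n\} \;\subseteq\; C_1,
\]
so the RIP of Definition~\ref{def:rip} holds with $l=1$. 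The SOS cone $\sum_{j=1}^m \Sigma[(\x,\e), C_j]$ appearing in $\sigma_0$ matches exactly the one prescribed in the sparse hierarchy $(\S_d)$, and Problem~\eqref{eq:lscalesdp2} is merely the expanded form of~\eqref{eq:lscalesdp1} after splitting $\sigma_0$ over the cliques.

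Applying Theorem~\ref{th:sparsesdp} to the minimization problem $\inf_{\K'} l'$ yields a nondecreasing sequence $(\underline{l_d'})_{d \in \N}$ converging to $\underline{l'}$. Applying it to $\inf_{\K'} (-l')$ (equivalently, replacing $\sup$ by $\inf$ and $l'-\mu$ by $\mu-l'$ in~\eqref{eq:lscalesdp2}) yields that $-\overline{l_d'}$ converges monotonically to $-\overline{l'}$, hence $\overline{l_d'} \to \overline{l'}$ from above. Multiplying both endpoints by $\epsilon > 0$ we conclude that $I_d^l \to I^l$ in the Hausdorff sense, which is the stated convergence. The only delicate point is the verification of the archimedean condition for the sparse formulation; once the redundant clique-wise ball constraints are in place, everything else is a direct specialization of Lasserre's sparse theorem.
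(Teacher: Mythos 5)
Your proposal is correct and follows essentially the same route as the paper's own proof: reduce to $l' = l/\epsilon$ by scaling, check the RIP for the cliques $C_1,\dots,C_m$ and the assumption of Remark~\ref{rk:sparsearch} for the encoding of $\K'$, then apply Theorem~\ref{th:sparsesdp} twice (once for the infimum, once for the supremum). The only difference is that you spell out details the paper leaves implicit — the explicit RIP verification $C_i \cap \bigl(\cup_{j<i} C_j\bigr) = \{1,\dots,n\} \subseteq C_1$, the redundancy of the clique-wise ball constraints, and the treatment of the upper bound via $-l'$ — which is a faithful elaboration rather than a different argument.
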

%
\begin{proof}
It is sufficient to show the similar convergence result for $l' = l/\epsilon$, as it implies the convergence for $l$ by a scaling argument.
The sets $C_1,\dots, C_m$ satisfy the RIP property (see Definition~\ref{def:rip}). Moreover, the encoding of $\K'$ satisfies the assumption mentioned in Remark~\ref{rk:sparsearch}. Thus, Theorem~\ref{th:sparsesdp} implies that the sequence of lower bounds $(\underline{l_d'})_{d \in \N}$ converges to $\underline{l'} := \inf_{(\x,\e) \in \K'} l'(\x,\e)$. Similarly, the sequence of upper bounds converge to $\overline{l'}$, yielding the desired result.
\end{proof}
Lemma~\ref{th:cvg_sdppoly} guarantees asymptotic convergence to the exact enclosure of $l$ when the relaxation order $d$ tends to infinity. However, it is more reasonable in practice to keep this order as small as possible to obtain tractable SDP relaxations. Hence, we generically solve each instance of Problem~\eqref{eq:lscalesdp2} at the minimal relaxation order, that is $d_0 := \max \{\lceil \deg l / 2\rceil) , \max_{1 \leq j \leq k+m} \{ \lceil \deg (g_j) / 2\rceil) \} \}$. 
%

\subsection{Non-polynomial and Conditional Programs}
\label{sec:nonpolsdp}
%
Other classes of programs do not only involve polynomials but also semialgebraic and transcendental functions as well as conditional statements. Such programs are of particular interest as they often occur in real-world applications such as biology modeling, space control or global optimization. We present how the general optimization procedure $\sdpbound$ can be extended to these nonlinear programs.
%
\subsubsection{Semialgebraic programs}
Here we assume that the function $l$ is semialgebraic, that is it involves non-polynomial components such as divisions or square roots.
Following~\cite{LasPut10}, we explain how to transform the optimization problem $\inf_{(\x,\e) \in \K} l (\x, \e)$ into a polynomial optimization problem, then use the sparse SDP program~\eqref{eq:lscalesdp2}. One way to perform this reformulation consists of introducing lifting variables to represent non-polynomial operations.
We first illustrate the extension to semialgebraic programs with an example.
\begin{example}
Let us consider the program implementing the rational function $f : [0, 1] \to \R$ defined by $f(x_1) := \frac{x_1}{1 + x_1}$. Applying the rounding procedure (with machine $\epsilon$) yields $\hat{f}(x_1,\e) := \frac{x _1(1 + e_2)}{(1 + x_1)(1 + e_1)}$ and the decomposition $r(x_1, \e) := \hat{f}(x_1,\e) - f(x_1) = l(x_1,\e) + h(x_1,\e) = s_1 (x_1) e_1 + s_2 (x_1) e_2 + h(x_1,\e)$. One has $s_1(x_1) = \frac{\partial r(x_1,\e)} {\partial e_1} (x_1,0) = -\frac{x_1}{1 + x_1}$ and $s_2(x_1) = - s_1(x_1)$.

Let $\K := [0, 1] \times [-\epsilon, \epsilon]^2$. One introduces a lifting variable $x_2 := \frac{x_1}{1 + x_1}$ to handle the division operator and encode the equality constraint $p(\x) :=  x_2 (1 + x_1) - x_1 = 0$ with the two inequality constraints $p (\x) \geq 0$ and $-p(\x) \geq 0$. To ensure the compactness assumption, one bounds $x_2$ within $I := [0, 1/2]$, using basic interval arithmetic.

Let $\Kpol := \{(\x,\e) \in [0, 1] \times I \times [\epsilon, \epsilon]^2 : p(\x) \geq 0 \,,\  - p(\x) \geq 0 \}$. Then the rational optimization problem involving $l$ is equivalent to $\inf_{(\x,\e) \in \Kpol} x_2 (-e_1 + e_2)$, a polynomial optimization problem that we can handle with the $\sdppoly$ procedure, described in Section~\ref{sec:polsdp}.
\end{example}

In the semialgebraic case, $\sdpbound$ calls an auxiliary procedure $\sdpsa$. 
Given input variables $\y := (\x,\e)$, input constraints $\K := \X \times \E$ and a 
semialgebraic function $l$,  $\sdpsa$ first applies a recursive procedure $\lift$ 
which returns variables $\y\poly$, constraints $\K\poly$ and a polynomial $f\poly$ such that the 
interval enclosure $I^l$ of $l(\y)$ over $\K$ is equal to the interval enclosure of 
the polynomial $l\poly(\y\poly)$ over $\K\poly$. 
Calling $\sdpsa$ yields the interval enclosure $I^l_d := \sdppolyfun{l\poly}{\K\poly}{d}$. We detail the lifting procedure $\lift$ in Figure~\ref{alg:lift} for the constructors \code{Pol}(Line~\lineref{line:liftpol}), \code{Div} (Line~\lineref{line:liftdiv}) and \code{Sqrt} (Line~\lineref{line:liftsqrt}). 
The interval $I$ obtained through the $\iabound$ procedure (Line~\eqref{line:liftia}) allows us to constrain the additional variable $x$ to ensure the assumption of Remark~\ref{rk:sparsearch}.
For the sake of consistency, we omit the other cases (\code{Neg}, \code{Add}, \code{Mul} and \code{Sub}) where the procedure is straightforward. For a similar procedure in the context of global optimization, we refer the interested reader to~\cite[Chapter 2]{MagronPhD}.
\if{
\begin{figure}[!ht]
\begin{algorithmic}[1]                 
\Require input variables $\y$, input constraints $\K$, semialgebraic expression $f$
\Ensure variables $\y\poly$, constraints $\K\poly$, polynomial expression $f\poly$
     $I := \iaboundfun{f}{\K}$ \label{line:liftia}    
	\State \If{\lstinline|$f = \ $ Pol ($p$)|\label{line:liftpol} } $\y\poly := \y$, $\K\poly := \K$, $\f\poly := p$ \EndIf
	\State \If {\lstinline|$f = \ $ Div ($g$, $h$)| \label{line:liftdiv}}  {
	\State $\y_g, \K_g, g\poly := \liftfun{\y}{\K}{g}$ 
	\State $\y_h, \K_h, h\poly := \liftfun{\y}{\K}{h}$
    \State $\y\poly := (\y_g,\y_h,x)$ \hspace{1cm} $f\poly := x$ 
	\State $\K\poly := \{\y\poly \in \K_g \times \K_h \times I : x g\poly = f\poly \}$}
	\State \If {\lstinline|$f = \ $ Sqrt ($g$)| \label{line:liftsqrt} }{
	\State $\y_g, \K_g, g\poly := \liftfun{\y}{\K}{g}$ 
	\State $\y\poly := (\y_g,x)$ \hspace{1cm} $f\poly := x$ 
	\State $\K\poly := \{\y\poly \in \K_g \times I : x^2 = g\poly \}$}\\
\code{...}

\Return $\y\poly, \K\poly, f\poly$
\end{algorithmic}
\caption{\code{lift}: a recursive procedure to reduce semialgebraic problems to polynomial problems.}
\label{alg:lift}
\end{figure}
}\fi
\begin{figure}[!t]
\begin{algorithmic}[1]                 
\Require input variables $\y$, input constraints $\K$, semialgebraic expression $f$
\Ensure variables $\y\poly$, constraints $\K\poly$, polynomial expression $f\poly$
    \State $I := \iaboundfun{f}{\K}$ \label{line:liftia}
	\If {\lstinline|$f = \ $ Pol ($p$)|} \label{line:liftpol} $\y\poly := \y$, $\K\poly := \K$, $\f\poly := p$
	\ElsIf {\lstinline|$f = \ $ Div ($g$, $h$)|} \label{line:liftdiv} 
	\State $\y_g, \K_g, g\poly := \liftfun{\y}{\K}{g}$ 
	\State $\y_h, \K_h, h\poly := \liftfun{\y}{\K}{h}$
    \State $\y\poly := (\y_g,\y_h,x)$ \hspace{1cm} $f\poly := x$ 
	\State $\K\poly := \{\y\poly \in \K_g \times \K_h \times I : x h\poly = g\poly \}$
	\ElsIf {\lstinline|$f = \ $ Sqrt ($g$)|} \label{line:liftsqrt} 
	\State $\y_g, \K_g, g\poly := \liftfun{\y}{\K}{g}$ 
	\State $\y\poly := (\y_g,x)$ \hspace{1cm} $f\poly := x$ 
	\State $\K\poly := \{\y\poly \in \K_g \times I : x^2 = g\poly \}$\\
\code{...}
	\EndIf
\State \Return $\y\poly, \K\poly, f\poly$
\end{algorithmic}
\caption{\code{lift}: a recursive procedure to reduce semialgebraic problems to polynomial problems.}
\label{alg:lift}
\end{figure}
The set of variables $\y\poly$ can be decomposed as $(\x\poly, \e)$, where $\x\poly$ gathers input variables with lifting variables and has a cardinality equal to $n\poly$.
Then, one easily shows that the sets $\{1, \dots, n\poly, e_1\}$,$\dots$,$\{1, \dots, n\poly, e_m\}$ satisfy the RIP, thus ensuring to solve efficiently the corresponding instances of Problem~\eqref{eq:lscalesdp2}.
\subsubsection{Transcendental programs}
The above lifting procedure allows an exact representation of the graph of a semialgebraic function with polynomials involving additional (lifting) variables. We consider a procedure to approximate transcendental functions with semialgebraic functions.
Here we assume that the function $l$ is transcendental,~i.e.~involves univariate non-semialgebraic components such as $\exp$ or $\sin$. 
We use the method presented in~\cite{Magron15sdp}, based on maxplus approximation of semiconvex transcendental
functions by quadratic functions. This idea comes from
optimal control~\cite{mceneaney-livre} and was
developed further to represent the value function by
a ``maxplus linear combination'', which is a supremum of quadratic polynomials at given points $x_i$. 
Given a set of points $(x_i)$, we approximate from above and from below every transcendental function $f_{\R}$ 
by infima and suprema of finitely many quadratic polynomials $(f_{x_i}^-)$ and $(f_{x_i}^+)$. 
Hence, we reduce the problem to 
semialgebraic optimization problems. 
We can interpret this method in a geometrical way 
by thinking of it in terms of ``quadratic cuts'', since quadratic inequalities are added to
approximate the graph of a transcendental function.

For each univariate transcendental function $f_{\R}$ in our dictionary set $\setD$, one assumes that $f_{\R}$ is twice differentiable, so that the univariate function $g := f_{\R} + \frac{\gamma}{2} |\cdot|^2$ is convex on $I$ for large enough $\gamma > 0$ (for more details, see the reference~\cite{mceneaney-livre}). It follows that there exists a constant $\gamma \leq \sup_{x\in I} -f_{\R}''(x)$ such that for all $x_i \in I$:
\begin{align}
\begin{split}
\label{eq:maxplus}
\forall x \in I, \quad f_{\R} (x)  \geq f_{x_i}^-(x) \,,\\
\text{with } f_{x_i}^- :=  -\frac{\gamma}{2} (x-x_i)^2 +f_{\R}'(x_i) (x - x_i) + f_{\R} (x_i) \,,
\end{split}
\end{align}
implying that for all $x \in I$, $f_{\R} (x)  \geq \max_{x_i \in I} f_{x_i}^-(x)$. Similarly, one obtains an upper-approximation $\min_{x_i \in I} f_{x_i}^+(x)$.
Figure~\ref{fig:logexp} provides such approximations for the function $f_{\R}(x) := \log (1 + \exp(x))$ on the interval $I := [-8, 8]$.
\begin{figure}[!ht]
\begin{center}
\includegraphics[scale=0.7]{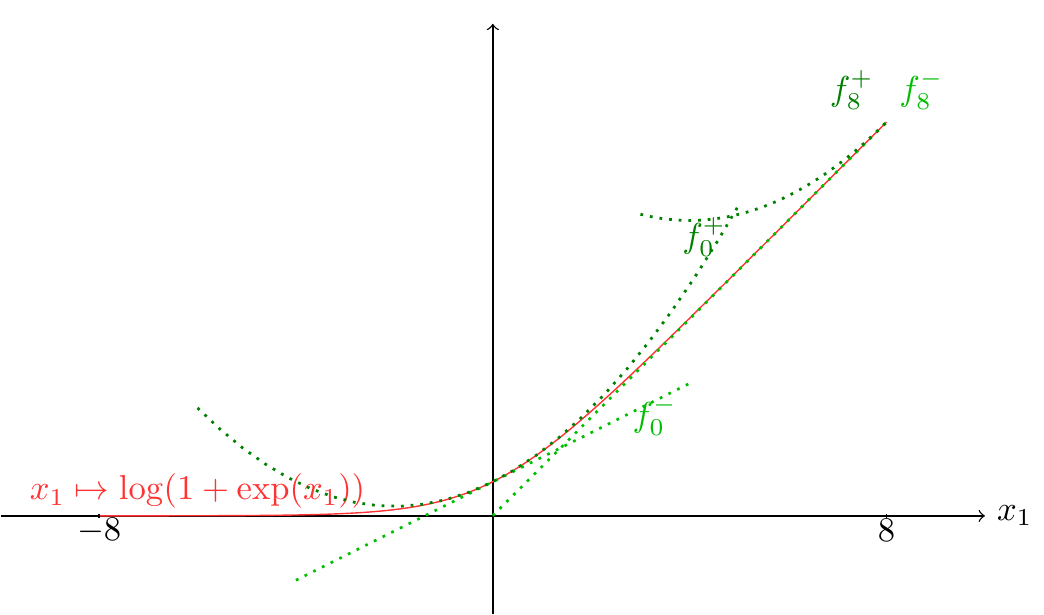}
\caption{Semialgebraic Approximations for $x \mapsto \log(1 + \exp(x))$: $\max \{ f_0^- (x), f_8^- (x)\} \leq  \log(1 + \exp (x)) \leq \min \{ f_0^+ (x), f_8^+ (x)\} $.}\label{fig:logexp}
\end{center}
\end{figure}

For transcendental programs, our procedure $\sdpbound$ calls the auxiliary procedure $\sdptransc$. Given input variables $(\x,\e)$, constraints $\K$ and a transcendental function $l$, $\sdptransc$ first computes a semialgebraic lower (resp.~upper)  approximation $l^-$ (resp.~$l^+$) of $l$ over $\K$. For more details in the context of global optimization, we refer the reader to~\cite{Magron15sdp}. Then, calling the procedure $\sdpsa$ allows us to get interval enclosures of $l^-$ as well as $l^+$.
We illustrate the procedure to handle transcendental programs with an example.
\begin{example}
\label{ex:logexp}
Let us consider the program implementing the transcendental function $f : [-8, 8] \to \R$ defined by $f(x_1) := \log (1 + \exp(x_1))$. Applying the rounding procedure  yields $\hat{f}(x_1,\e) := \log [(1 + \exp(x_1) (1 + e_1)) \, (1 + e_2)](1 + e_3)$. 
Here, $|e_2|$ is bounded by the machine $\epsilon$ while $|e_1|$ (resp.~$|e_3|$) is bounded with an adjusted absolute error $\epsilon_1 := \epsilon(\exp)$ (resp.~$\epsilon_3 := \epsilon(\log)$).
Let $\K:= [-8,8] \times [-\epsilon_1, \epsilon_1] \times [-\epsilon, \epsilon] \times [-\epsilon_3, \epsilon_3]$.

One obtains the decomposition $r(x_1, \e) := \hat{f}(x_1,\e) - f(x_1) = l(x_1,\e) + h(x_1,\e) = s_1 (x_1) e_1 + s_2 (x_1) e_2 + s_3 (x_1) e_3 + h(x_1, \e)$, with 
$s_1(x_1) = \frac{\exp(x_1)} {1 + \exp(x_1)}$, $s_2(x_1) = 1$ and $s_3(x_1) = \log (1 + \exp(x_1)) = f(x_1)$. Figure~\ref{fig:logexp} provides a lower approximation $s_3^- := \max\{f_0^-,f_8^-\}$ of $s_3$ as well as an upper approximation $s_3^+ := \min \{f_0^+,f_8^+\}$. One can get similar approximations $s_1^-$ and $s_1^+$ for $s_1$. 
One first obtains (coarse) interval enclosures $I_2 = \iaboundfun{s_1}{\K}$ and $I_3 = \iaboundfun{s_3}{\K}$ and one introduces extra variables $x_2 \in I_2$ and $x_3 \in I_3$ to represent $s_1$ and $s_3$ respectively.
Then, the interval enclosure of $l$ over $\K$ is equal to the interval enclosure of $l\sa(\x,\e) := x_2 e_1 + e_2 + x_3 e_3$ over the set $\K\sa:= \{(x_1,\e) \in \K \,, (x_2, x_3) \in I_2 \times I_3 \,, s_1^-(x_1) \leq x_2 \leq s_1^+(x_1) \,, s_3^-(x_1) \leq x_3 \leq s_3^+(x_1) \}$.
\end{example}
\subsubsection{Programs with conditionals}
Finally, we explain how to extend our bounding procedure to nonlinear programs involving conditionals through the recursive algorithm given in Figure~\ref{alg:bound_nlprog}. The overall procedure is very similar to the one implemented within the $\rosa$ tool~\cite[Section~7, Figure~6]{Darulova14Popl}.
The $\boundnlprog$ algorithm relies on the $\bound$ procedure (see Figure~\ref{alg:bound} in Section~\ref{sec:transcsdp}) to compute roundoff error bounds of programs implementing transcendental functions (Line~\lineref{line:noncnd}).
From Line~\lineref{line:cnd} to Line~\lineref{line:endcnd}, the algorithm handles the case when the program implements a function $f$ defined as follows:
\[   
f (\x) := 
     \begin{cases}
       g(\x) &\text{if } p(\x) \geq 0,\\
       h(\x) &\text{otherwise}.
     \end{cases}
\]
The first branch output is $g$ while the second one is $h$. More sophisticated conditionals, such as ``$p_1(x) \geq 0 \text{ or/and } p_2(x) \geq 0$'', are not handled at the moment but one could easily extend the current framework to do so.
%
\if{
\begin{figure}[!t]
\begin{algorithmic}[1]
\Require input variables $\x$, input constraints $\X$, nonlinear expression $f$, rounded expression $\hat{f}$, error variables $\e$, error constraints $\E$, relaxation order $d$
\Ensure interval enclosure $I_d$ of the error $\hat{f} - f$ over $\K := \X \times \E$
\SetAlgoNoLine
\State \eIf{\lstinline|$f = \ $ IfThenElse ($p, g, h$)| \label{line:cnd}} {
\State  $I_d^p := \boundfun{\x}{\X}{p}{\hat{p}}{\e}{\E}{d} = [\underline{p_d}, \overline{p_d}]$ \label{line:polcnd} 
\State $\X_1 := \{ \x \in \X : 0 \leq p(\x) \leq \overline{p_d} \}$ \label{line:X1} 
\State $\X_2 := \{ \x \in \X : \underline{p_d} \leq p(\x) \leq 0 \}$\label{line:X2} 
\State $\X_3 := \{ \x \in \X : 0 \leq p(\x) \}$\label{line:X3}
\State $\X_4 := \{ \x \in \X : p(\x) \leq 0 \}$\label{line:X4} 
\State $I_d^1 := \boundnlprogfun{\x}{\X_1}{g}{\hat{h}}{\e}{\E}{d}$\label{line:I1}
\State $I_d^2 := \boundnlprogfun{\x}{\X_2}{h}{\hat{g}}{\e}{\E}{d}$\label{line:I2} 
\State $I_d^3 := \boundnlprogfun{\x}{\X_3}{g}{\hat{g}}{\e}{\E}{d}$\label{line:I3} 
\State $I_d^4 := \boundnlprogfun{\x}{\X_4}{h}{\hat{h}}{\e}{\E}{d}$\label{line:I4} 
\State \Return $I_d := I_d^1 \cup I_d^2 \cup I_d^3 \cup I_d^4$ \label{line:endcnd}}
{\State \Return $I_d := \boundfun{\x}{\X}{f}{\hat{f}}{\e}{\E}{d}$ \label{line:noncnd}}
\end{algorithmic}
\caption{\code{bound_nlprog}: our algorithm to compute roundoff error bounds of  programs with conditional statements.}
\label{alg:bound_nlprog}
\end{figure}
}\fi
\begin{figure}[!t]
\begin{algorithmic}[1]
\Require input variables $\x$, input constraints $\X$, nonlinear expression $f$, rounded expression $\hat{f}$, error variables $\e$, error constraints $\E$, relaxation order $d$
\Ensure interval enclosure $I_d$ of the error $\hat{f} - f$ over $\K := \X \times \E$
\If{\lstinline|$f = \ $ IfThenElse ($p, g, h$)|} \label{line:cnd}
\State $I_d^p := \boundfun{\x}{\X}{p}{\hat{p}}{\e}{\E}{d} = [\underline{p_d}, \overline{p_d}]$ \label{line:polcnd}
\State $\X_1 := \{ \x \in \X : 0 \leq p(\x) \leq -\underline{p_d}\}$ \label{line:X1}
\State $\X_2 := \{ \x \in \X : - \overline{p_d} \leq p(\x) \leq 0 \}$\label{line:X2}
\State $\X_3 := \{ \x \in \X : 0 \leq p(\x) \}$\label{line:X3}
\State $\X_4 := \{ \x \in \X : p(\x) \leq 0 \}$\label{line:X4}
\State $I_d^1 := \boundnlprogfun{\x}{\X_1}{g}{\hat{h}}{\e}{\E}{d}$\label{line:I1}
\State $I_d^2 := \boundnlprogfun{\x}{\X_2}{h}{\hat{g}}{\e}{\E}{d}$\label{line:I2}
\State $I_d^3 := \boundnlprogfun{\x}{\X_3}{g}{\hat{g}}{\e}{\E}{d}$\label{line:I3}
\State $I_d^4 := \boundnlprogfun{\x}{\X_4}{h}{\hat{h}}{\e}{\E}{d}$\label{line:I4}
\State \Return $I_d := I_d^1 \cup I_d^2 \cup I_d^3 \cup I_d^4$ \label{line:endcnd}
\Else ~ \Return $I_d := \boundfun{\x}{\X}{f}{\hat{f}}{\e}{\E}{d}$ \label{line:noncnd}
\EndIf
\end{algorithmic}
\caption{\code{bound_nlprog}: our algorithm to compute roundoff error bounds of  programs with conditional statements.}
\label{alg:bound_nlprog}
\end{figure}
A preliminary step consists of computing the roundoff error enclosure $I_d^p := [\underline{p_d}, \overline{p_d}]$ (Line~\lineref{line:polcnd}) for the program implementing the polynomial $p$. 
Then the procedure computes bounds related to the discontinuity error of the branch, that is the maximal value between the four following errors: 
\begin{itemize}[noitemsep,nolistsep]
\item (Line~\lineref{line:I1}) the error obtained while computing the rounded result $\hat{h}$ of the second branch instead of computing the exact result $g$ of the first one, occurring for the set of variables $(\x,\e)$ such that $\hat{p}(\x,\e) \leq 0 \leq p(\x)$. For scalability and numerical issues, we consider an over-approximation $\X_1$ (Line~\lineref{line:X1}) of this set, where the variables $\x$ satisfy the relaxed constraints $0 \leq p(\x) \leq -\underline{p_d}$. Note that in this case, one has $l(\x, \e) = r(\x, 0) + \sum_{j=1}^m \frac{\partial r(\x,\e)} {\partial e_j} (\x,0) \, e_j$, with $r(\x, 0) =  h(\x) - g(\x) \neq 0$. In general, we expect the magnitude of the partial derivative sum to be very small compared to the one of $r(\x, 0)$.
\item (Line~\lineref{line:I2}) the error obtained while computing the rounded result $\hat{g}$ of the first branch instead of computing the exact result $h$ of the second one, occurring for the set of variables $(\x,\e)$ such that $p(\x) \leq 0 \leq \hat{p}(\x,\e)$. We also consider an over-approximation $\X_2$ (Line~\lineref{line:X2}), where the variables $\x$ satisfy the relaxed constraints $- \overline{p_d} \leq p(\x) \leq 0$.
\item (Line~\lineref{line:I3}) the roundoff error corresponding to the program implementation of $g$.
\item(Line~\lineref{line:I4}) the roundoff error corresponding to the program implementation of $h$.
\end{itemize}
\subsubsection{Simplification of error terms}
In addition, our algorithm \code{bound_nlprog} integrates several features to reduce the number of error variables. First, it memorizes all sub-expressions of the nonlinear expression tree to perform common sub-expressions elimination. 
We can also simplify error term products, thanks to the following lemma.
\begin{lemma}[\protect{Higham~\cite[Lemma 3.3]{higham2002accuracy}}]
\label{th:redproduct}
Let $\epsilon$ be the machine precision and assume that for a given integer $k$, one has $\epsilon < \frac{1}{k}$ and $\gamma_k := \frac{k \epsilon}{1 - k \epsilon}$. Then, for all $e_1, \dots, e_k \in [-\epsilon, \epsilon]$, there exists $\theta_k$ such that ${\prod_{i=1}^k (1 + e_i) = 1 + \theta_k}$ and $\mid \theta_k \mid \leq \gamma_k$.
\end{lemma}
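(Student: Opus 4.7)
The plan is to proceed by induction on $k$. For the base case $k=1$, one directly has $\prod_{i=1}^1 (1+e_i) = 1 + e_1$, so we set $\theta_1 := e_1$; then $|\theta_1| \leq \epsilon \leq \epsilon/(1-\epsilon) = \gamma_1$, which handles the result immediately.

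For the inductive step, I would assume the statement for $k$, obtaining $\theta_k$ with $\prod_{i=1}^k (1+e_i) = 1 + \theta_k$ and $|\theta_k| \leq \gamma_k$. Then I would expand
\begin{equation*}
\prod_{i=1}^{k+1} (1 + e_i) = (1 + \theta_k)(1 + e_{k+1}) = 1 + \bigl(\theta_k + e_{k+1} + \theta_k e_{k+1}\bigr),
\end{equation*}
which motivates the definition $\theta_{k+1} := \theta_k + e_{k+1}(1 + \theta_k)$. Applying the triangle inequality together with the inductive hypothesis and the bound $|e_{k+1}| \leq \epsilon$ gives $|\theta_{k+1}| \leq \gamma_k (1+\epsilon) + \epsilon$.

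The remaining step — and the only genuinely computational part — is the algebraic verification that $\gamma_k(1+\epsilon) + \epsilon \leq \gamma_{k+1}$. Substituting the definition $\gamma_k = k\epsilon/(1-k\epsilon)$ and combining the fractions yields $\gamma_k(1+\epsilon) + \epsilon = (k+1)\epsilon/(1-k\epsilon)$. Since the assumption $\epsilon < 1/(k+1)$ (implicit, because otherwise $\gamma_{k+1}$ is not well defined) ensures $0 < 1-(k+1)\epsilon \leq 1-k\epsilon$, monotonicity of $x \mapsto (k+1)\epsilon/x$ on positive $x$ gives $(k+1)\epsilon/(1-k\epsilon) \leq (k+1)\epsilon/(1-(k+1)\epsilon) = \gamma_{k+1}$, closing the induction.

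The main obstacle — if one can call it that — is the bookkeeping in the final inequality, where one must keep track of the domain condition ($\epsilon < 1/(k+1)$ at the $(k+1)$-th step) to ensure positivity of the denominators so that the monotonicity step is valid. Beyond this, the argument is a direct induction and requires no further ingredients.
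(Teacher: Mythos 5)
Your proof is correct: the base case, the recursion $\theta_{k+1} = \theta_k + e_{k+1}(1+\theta_k)$, the algebra $\gamma_k(1+\epsilon)+\epsilon = (k+1)\epsilon/(1-k\epsilon) \leq \gamma_{k+1}$, and the attention to positivity of the denominators are all sound, and the fact that $\epsilon < 1/(k+1)$ implies $\epsilon < 1/k$ makes the inductive hypothesis applicable. The paper itself gives no proof of this lemma---it simply cites Higham~\cite[Lemma 3.3]{higham2002accuracy}---and your induction is essentially the standard argument found there, so there is nothing to reconcile.
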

Lemma~\ref{th:redproduct} implies that for any $k$ such that $\epsilon < \frac{1}{k}$, one has $\theta_k \leq (k + 1) \epsilon$. Our algorithm has an option to automatically derive safe over-approximations of the absolute roundoff error while introducing only one variable $e_1$ (bounded by $(k + 1) \epsilon$) instead of $k$ error variables $e_1, \dots, e_k$ (bounded by $\epsilon$). The cost of solving the corresponding optimization problem can be significantly reduced but it yields coarser error bounds.

\section{EXPERIMENTAL EVALUATION}
\label{sec:benchs}
Now, we present experimental results obtained by applying our general $\boundnlprog$ algorithm (see Section~\ref{sec:fpsdp}, Figure~\ref{alg:bound_nlprog}) to various examples coming from physics, biology, space control and optimization. 
The  $\boundnlprog$ algorithm is implemented in an open-source tool called $\realtofloat$, built in top of the $\nlcertify$ nonlinear verification package, relying on $\ocaml$ (Version $4.02.1$), $\coq$ (Version $8.4\text{pl}5$) and interfaced with the SDP solver $\sdpa$ (Version $7.3.9$). The SDP solver output numerical SOS certificates, which are converted into rational SOS using the {\sc Zarith} $\ocaml$ library (Version $1.2$), implementing arithmetic operations over arbitrary-precision integers.
For more details about the installation and usage of $\realtofloat$, we refer to the dedicated web-page\footnote{\url{http://nl-certify.forge.ocamlcore.org/real2float.html}} and the setup instructions.\footnote{see the \texttt{README.md} file in the top level directory}
All examples are displayed in Appendix A as the corresponding $\realtofloat$ input text files and satisfy our nonlinear program semantics (see Section~\ref{sec:fpbackground}). All results have been obtained on an Intel Core i7-5600U CPU ($2.60\, $GHz). Execution timings have been computed by averaging over five runs.

\subsection{Benchmark Presentation}
For each example, we compared the quality of the roundoff error bounds (Table~\ref{table:error}) and corresponding execution times (Table~\ref{table:cpu}) while running our tool $\realtofloat$, $\fptaylor$ (version from May $2016$~\cite{fptaylor15}), $\rosa$ (version from May $2014$ used in~\cite{Darulova14Popl}), {\sc Gappa} (version 1.2.0~\cite{Daumas10}) and {\sc Fluctuat} (version 3.1370~\cite{fluctuat}). 

To ensure fair comparison, our initial choice was to focus on tools providing certificates  and using the same rounding model ($\fptaylor$ or $\rosa$ which relies on an SMT solver theoretically able to output satisfiability certificates).
However, for the sake of completeness, we have also compared $\realtofloat$ with {\sc Gappa}~\cite{Daumas10} and  {\sc Fluctuat}~\cite{fluctuat}. For all tools, we use default parameters, we use the default number of subdivisions in {\sc Fluctuat} and for {\sc Gappa}, we provide the simplest user-provided hints we could think of. 
  
A head-to-head comparison is not straightforward here due to differences in the approaches: {\sc Gappa} uses an improved rounding model based on a piecewise constant absolute error bound (see Section~\ref{sec:related} for more details), and {\sc Fluctuat} does not produce output certificates. 
We also performed further experiments while turning on the improved rounding model of $\fptaylor$ (which is the same as in {\sc Gappa} and {\sc Fluctuat}). 

A given program implements a nonlinear function $f(\x)$, involving variables $\x$ lying in a set $\X$ contained in a box $[\a, \b]$.
Applying our rounding model on $f$ yields the nonlinear expression $\hat{f}(\x,\e)$, involving additional error variables $\e$ lying in a set $\E$. 

At a given semidefinite relaxation order $d$, our tool computes the upper bound $f_d$ of the absolute roundoff error $\mid f - \hat{f} \mid $ over $\K := \X \times \E$ and verifies that it is less than a requested number $\epsilon_{\realtofloat}^+$. As we keep the relaxation order $d$ as low as possible to ensure tractable SDP programs, it can happen that $f_d > \epsilon_{\realtofloat}^+$. 
The $\realtofloat$ tool has the default option to perform box subdivisions when the number of initial variables and maximal polynomial degree are both small. When the option is disabled, the solver does not perform subdivisions and outputs the error bound $f_d$. When enabled, we subdivide a randomly chosen interval of the box  $[\a, \b]$ in two halves to obtain two sub-sets $\X_1$ and $\X_2$, fulfilling $\X := \X_1 \cup \X_2$, and apply the $\boundnlprog$ algorithm on both sub-sets either until we succeed to certify that $\epsilon_{\realtofloat}^+$ is a sound upper bound of the roundoff error or until the maximal number of branch and bound iterations is reached. For each benchmark, an error bound $\epsilon_{\realtofloat}$ is automatically computed while setting $\epsilon_{\realtofloat}^+ = 0$.

The number $\epsilon_{\realtofloat}$ is compared with the upper bounds computed by two other tools implementing simple rounding models: $\fptaylor$, which relies on Taylor Symbolic expansions~\cite{fptaylor15}, and $\rosa$, which relies on SMT and affine arithmetic~\cite{Darulova14Popl}. 



For comparison purpose, we also executed each program using random inputs, following the approach used
in the $\rosa$ paper~\cite{Darulova14Popl}.  Specifically,
we executed each program on $10^7$ random inputs satisfying the input restrictions.  The results from these random
samples provide lower bounds on the absolute error. For consistency of comparison, the error bounds computed with $\fptaylor$ correspond to the procedure FPT.~(a) (see~\cite{fptaylor15}) using the same simplified rounding model as the one described in Equation~\ref{eq:roundbop}, also used in $\rosa$~\cite{Darulova14Popl}.
\begin{table*}[!ht]
\begin{center}
\tbl{Comparison results of upper and lower bounds for absolute roundoff errors among tools implementing either simple or advanced rounding model. For each model, results of the winning tool are emphasized using \textbf{bold fonts}.\label{table:error}}{
\begin{tabular}{lc|ccc|ccc|c}
\hline
& & \multicolumn{3}{c|}{Simple rounding} & \multicolumn{3}{c|}{Improved rounding} & \\
\multirow{1}{*}{Benchmark} &  \multirow{1}{*}{\texttt{id}} & \multirow{1}{*}{$\realtofloat$} & $\rosa$  & $\fptaylor$ & $\fptaylor$ & {\sc Gappa} & {\sc Fluctuat} &   \multirow{1}{*}{lower bound} \\
\hline  
\multicolumn{9}{l}{Programs involving polynomial functions}    \\
\hline
\multirow{1}{*}{\texttt{rigidBody1}} & \texttt{a}
& $5.33\text{e--}13$ & $5.08\text{e--}13$ & $\mathbf{3.87\textbf{e--}13}$ & $\mathbf{2.95\textbf{e--}13}$ & $\mathbf{2.95\textbf{e--}13}$ & $3.22\text{e--}13$  & $2.28\text{e--}13$\\
\multirow{1}{*}{\texttt{rigidBody2}} & \texttt{b}
& $6.48\text{e--}11$ & $6.48\text{e--}11$ & $\mathbf{5.24\textbf{e--}11}$ & $\mathbf{3.61\textbf{e--}11}$ & $\mathbf{3.61\textbf{e--}11}$ & $3.65\text{e--}11$  & $2.19\text{e--}11$\\
\multirow{1}{*}{\texttt{kepler0}} & \texttt{c}
& ${1.18\text{e--}13}$ & $1.16\text{e--}13$ & $\mathbf{1.05\text{e--}13}$ & $\mathbf{7.47\textbf{e--}14}$ & ${1.12\text{e--}13}$  & $1.26\text{e--}13$  & $2.23\text{e--}14$\\
\multirow{1}{*}{\texttt{kepler1}} & \texttt{d}
& $\mathbf{4.47\textbf{e--}13}$ & $6.49\text{e--}13$ & ${4.49\text{e--}13}$ & $\mathbf{2.87\textbf{e--}13}$ & $4.89\text{e--}13$ & $5.57\text{e--}13$   & $7.58\text{e--}14$\\
\multirow{1}{*}{\texttt{kepler2}} & \texttt{e}
& $\mathbf{2.09\textbf{e--}12}$ & $2.89\text{e--}12$ & ${2.10\text{e--}12}$ & $\mathbf{1.58\textbf{e--}12}$ & ${2.45\text{e--}12}$ & $2.90\text{e--}12$  & $3.03\text{e--}13$\\
\multirow{1}{*}{\texttt{sineTaylor}} & \texttt{f}
& $\mathbf{6.03\textbf{e--}16}$ & $9.56\text{e--}16$ & $6.75\text{e--}16$ & $\mathbf{4.44\textbf{e--}16}$ & $8.33\text{e--}02$ & $6.86\text{e--}16$  & $2.85\text{e--}16$\\
\multirow{1}{*}{\texttt{sineOrder3}} & \texttt{g}
& $1.19\text{e--}15$ & $1.11\text{e--}15$ & $\mathbf{9.97\textbf{e--}16}$ & ${7.95\text{e--}16}$ & $\mathbf{7.62\textbf{e--}16}$ & $1.03\text{e--}15$  & $3.34\text{e--}16$\\
\multirow{1}{*}{\texttt{sqroot}} & \texttt{h}
& $1.29\text{e--}15$ & $8.41\text{e--}16$ & $\mathbf{7.13\textbf{e--}16}$ & $\mathbf{5.02\textbf{e--}16}$ & ${5.37\text{e--}16}$ & $3.21\text{e--}13$   & $4.45\text{e--}16$\\
\multirow{1}{*}{\texttt{himmilbeau}} & \texttt{i}
& ${1.43\text{e--}12}$ & ${1.43\text{e--}12}$ & $\mathbf{1.32\textbf{e--}12}$ & $\mathbf{1.01\textbf{e--}12}$ & $\mathbf{1.01\textbf{e--}12}$ & $\mathbf{1.01\textbf{e--}12}$  & $1.47\text{e--}13$ \\
\hline
\multicolumn{9}{l}{Programs involving semialgebraic functions}    \\
\hline
\multirow{1}{*}{\texttt{doppler1}} & \texttt{j}
& $7.65\text{e--}12$ & $4.92\text{e--}13$ & $\mathbf{1.59\textbf{e--}13}$ & $\mathbf{1.29\textbf{e--}13}$ & $1.82\text{e--}13$ & ${1.34\text{e--}13}$  & $7.11\text{e--}14$\\
\multirow{1}{*}{\texttt{doppler2}} & \texttt{k}
& $1.57\text{e--}11$ & $1.29\text{e--}12$ & $\mathbf{2.90\textbf{e--}13}$ & $\mathbf{2.39\textbf{e--}13}$ & ${3.23\text{e--}13}$ & ${2.53\text{e--}13}$  & $1.14\text{e--}13$\\
\multirow{1}{*}{\texttt{doppler3}} & \texttt{l}
& $8.55\text{e--}12$ & $2.03\text{e--}13$ & $\mathbf{8.22\textbf{e--}14}$ & $\mathbf{6.96\textbf{e--}14}$ & $9.29\text{e--}14$ & ${7.36\text{e--}14}$  & $4.27\text{e--}14$\\
\multirow{1}{*}{\texttt{verhulst}} & \texttt{m}
& $4.67\text{e--}16$ & $6.82\text{e--}16$ & $\mathbf{3.53\textbf{e--}16}$ & $\mathbf{2.50\textbf{e--}16}$ & ${3.18\text{e--}16}$  & $4.84\text{e--}16$  & $2.23\text{e--}16$\\
\multirow{1}{*}{\texttt{carbonGas}} & \texttt{n}
& $2.21\text{e--}08$ & $4.64\text{e--}08$ & $\mathbf{1.23\textbf{e--}08}$ & $\mathbf{7.77\textbf{e--}09}$ & ${8.85\text{e--}09}$ & $1.86\text{e--}08$  & $4.11\text{e--}09$ \\
\multirow{1}{*}{\texttt{predPrey}} & \texttt{o}
& $2.52\text{e--}16$ & $2.94\text{e--}16$ & $\mathbf{1.89\textbf{e--}16}$ & $\mathbf{1.60\textbf{e--}16}$ & ${1.95\text{e--}16}$ & $2.45\text{e--}16$   & $1.47\text{e--}16$ \\
\multirow{1}{*}{\texttt{turbine1}} & \texttt{p}
& $2.45\text{e--}11$  & $1.25\text{e--}13$ & $\mathbf{2.33\textbf{e--}14}$ & $\mathbf{1.67\textbf{e--}14}$ & ${3.88\text{e--}14}$ & $6.09\text{e--}14$ & $1.07\text{e--}14$ \\
\multirow{1}{*}{\texttt{turbine2}} & \texttt{q}
& $2.08\text{e--}12$ & $1.76\text{e--}13$ & $\mathbf{3.14\textbf{e--}14}$ & $\mathbf{2.01\textbf{e--}14}$ & ${3.97\text{e--}14}$ &$8.96\text{e--}14$  & $1.43\text{e--}14$ \\
\multirow{1}{*}{\texttt{turbine3}} & \texttt{r}
& $1.71\text{e--}11$ & $8.50\text{e--}14$ & $\mathbf{1.70\textbf{e--}14}$ & $\mathbf{9.58\textbf{e--}15}$ & $9.96\text{e+}00$ & $4.90\text{e--}14$  & $5.33\text{e--}15$ \\
\multirow{1}{*}{\texttt{jetEngine}} & \texttt{s}
& $\text{OoM}$  & $1.62\text{e--}08$  & $\mathbf{1.50\textbf{e--}11}$  & $\mathbf{1.03\textbf{e--}11}$ & $1.32\text{e+}05$ & $1.82\text{e--}11$  & $5.46\text{e--}12$ \\
\hline
\multicolumn{9}{l}{Programs implementing polynomial functions with polynomial preconditions}\\
\hline
\multirow{1}{*}{\texttt{floudas2\_6}} & \texttt{t}
& $\mathbf{5.15\textbf{e--}13}$ & $5.87\text{e--}13$ & $7.88\text{e--}13$ & $\mathbf{5.94\textbf{e--}13}$ & $5.98\text{e--}13$ & $7.45\text{e--}13$  & $4.56\text{e--}14$ \\
\multirow{1}{*}{\texttt{floudas3\_3}} & \texttt{u}
& $5.81\text{e--}13$ & $\mathbf{4.05\textbf{e--}13}$ & $5.76\text{e--}13$ & $4.29\text{e--}13$ & $\mathbf{2.65\textbf{e--}13}$ & $4.32\text{e--}13$  & $1.48\text{e--}13$ \\
\multirow{1}{*}{\texttt{floudas3\_4}} & \texttt{v}
& $2.78\text{e--}15$ & ${2.56\text{e--}15}$ &  $\mathbf{2.23\textbf{e--}15}$ & $1.78\text{e--}15$ & $\mathbf{1.23\textbf{e--}15}$ & $2.23\text{e--}15$  & $3.80\text{e--}16$ \\
\multirow{1}{*}{\texttt{floudas4\_6}} & \texttt{w}
&  $1.82\text{e--}15$ & ${1.33\text{e--}15}$ & $\mathbf{1.23\textbf{e--}15}$ & $\mathbf{8.89\textbf{e--}16}$ & $\mathbf{8.89\textbf{e--}16}$ & $1.12\text{e--}15$  & $2.35\text{e--}16$ \\
\multirow{1}{*}{\texttt{floudas4\_7}} & \texttt{x}
& $\mathbf{1.06\textbf{e--}14}$ & $1.31\text{e--}14$ & $1.80\text{e--}14$ & $1.32\text{e--}14$ & $\mathbf{7.44\textbf{e--}15}$ & $1.71\text{e--}14$  & $7.31\text{e--}15$ \\
\hline
\multicolumn{9}{l}{Programs involving conditional statements}    \\
\hline
\multirow{1}{*}{\texttt{cav10}} & \texttt{y} 
& $\mathbf{2.91\textbf{e+}00}$ & $\mathbf{2.91\textbf{e+}00}$ & $-$ & $-$ & $-$ & $\mathbf{1.02\textbf{e+}02}$  & $2.90\text{e+}00$ \\
\multirow{1}{*}{\texttt{perin}} & \texttt{z} 
& $\mathbf{2.01\textbf{e+}00}$ & $\mathbf{2.01\textbf{e+}00}$ & $-$ & $-$ & $-$ & $\mathbf{4.91\textbf{e+}01}$  & $2.00\text{e+}00$ \\
\hline
\multicolumn{9}{l}{Programs implementing transcendental functions}\\
\hline
\multirow{1}{*}{\texttt{logexp}} & \texttt{\textalpha}
& ${2.52\text{e--}15}$ & $-$ & $\mathbf{2.07\textbf{e--}15}$ & $\mathbf{1.99\textbf{e--}15}$ & $-$ & $-$  & $1.19\text{e--}15$ \\
\multirow{1}{*}{\texttt{sphere}} & \texttt{\textbeta}
& ${1.53\text{e--}14}$ & $-$ & $\mathbf{1.29\textbf{e--}14}$ & $\mathbf{8.21\textbf{e--}15}$ & $-$ & $-$  & $5.05\text{e--}15$ \\
\multirow{1}{*}{\texttt{hartman3}} & \texttt{\textgamma}
& $2.99\text{e--}13$ & $-$  & $\mathbf{1.34\textbf{e--}14}$ & $\mathbf{4.97\textbf{e--}15}$ & $-$ & $-$  & $1.10\text{e--}15$ \\
\multirow{1}{*}{\texttt{hartman6}} & \texttt{\textdelta}
& $5.09\text{e--}13$ & $-$ & $\mathbf{2.55\textbf{e--}14}$ & $\mathbf{8.19\textbf{e--}15}$ & $-$ & $-$  & $2.20\text{e--}15$ \\
\hline
\end{tabular}
}
\end{center}
\end{table*}
For the sake of further presentation, we identify with a letter (from \code{a} to \code{z} and from \texttt{\textalpha} to \texttt{\textgamma}) each of the $\nbenchs$ nonlinear programs. The programs \code{a-b} and \code{f-s} are taken from the $\rosa$ paper~\cite{Darulova14Popl} and were used in the $\fptaylor$ paper~\cite{fptaylor15} as well:
%
\begin{itemize}[noitemsep,nolistsep]
\item The first $9$ programs implement polynomial functions: \code{a-b} come from physics, \code{c-e} are derived from expressions involved in the proof of Kepler Conjecture~\cite{Flyspeck06} and \code{f-h} implement polynomial approximations of the sine and square root functions. 
The program \code{i} is issued from the global optimization literature and implements the problem \textit{Himmilbeau} in~\cite{Ali05}.
\item The $10$ programs \code{j-s} implement semialgebraic functions: \code{j-l} and \code{p-s} come from physics, \code{m} and \code{o} from biology, \code{n} from control. All these programs are used to compare $\fptaylor$ and $\rosa$ in~\cite{fptaylor15}. 
\item The five programs \code{t-x} come from the global optimization literature and correspond respectively to Problem 2.6, 3.3, 3.4, 4.6 and 4.7 in~\cite{Floudas90}. We selected them as they typically involve nontrivial polynomial preconditions (i.e. $\X$ is not a simple box but rather a set defined with conjunction of nonlinear polynomial inequalities).
\item The two programs \code{y-z} involve conditional statements and come from the static analysis literature. They correspond to the two respective running examples of~\cite{Zonotope10} ({\sc Fluctuat}'s divergence error computation) and~\cite{Marechal14}. The first program \code{y} is used in the $\rosa$ paper~\cite{Darulova14Popl} for the analysis of branches discontinuity error.
\item The last four programs \texttt{\textalpha}-\texttt{\textgamma} involve transcendental functions. The two programs \texttt{\textalpha} and \texttt{\textbeta} are used in the $\fptaylor$ paper~\cite{fptaylor15} and correspond respectively to the program \code{logexp} (see Example~\ref{ex:logexp}) and the program \code{sphere} taken from NASA World Wind Java SDK~\cite{NASA}. The $2$ programs \texttt{\textgamma} and \texttt{\textdelta} respectively implement the functions coming from the optimization problems \textit{Hartman 3} and \textit{Hartman 6} in~\cite{Ali05}, involving both sums of exponential functions composed with quadratic polynomials.
\end{itemize}
\paragraph{Tool comparison settings} The five tools $\realtofloat$, $\rosa$, $\fptaylor$, {\sc Gappa} and {\sc Fluctuat} can handle programs with input variable uncertainties as well as any floating-point precision, but for the sake of conciseness, we only considered to compare their performance on programs implemented in double precision floating-point ($\epsilon = 2^{-53}$). By contrast with preliminary experiments presented in Section~\ref{sec:overview} where we considered floating-point input variables, we run each tool by considering all input variables as real variables, thus we apply the rounding operation to all of them.
For the programs involving transcendental functions, we followed the same procedure as in $\fptaylor$ while adjusting the precision $\epsilon \,  (f_{\R}) = 1.5 \epsilon$ for each special function $f_{\R} \in \{\sin, \cos, \log, \exp \}$. Each univariate transcendental function is approximated from below (resp.~from above) using suprema (resp.~infima) of linear or quadratic polynomials (see Example~\ref{ex:logexp} for the case of program \code{logexp}). 
%
\subsection{Comparison Results}
Comparison results for error bound computation are presented in Table~\ref{table:error}. 
Among the tools using the simple rounding model ($\realtofloat$, $\rosa$ and $\fptaylor$), our $\realtofloat$ tool computes the tightest upper bounds for $7$ (resp.~$4$) out of $\nbenchs$ benchmarks when comparing with $\rosa$ and $\fptaylor$ (resp.~all tools). For all programs \code{j-s} involving semialgebraic functions and the four programs $\mathtt{\alphab}$-$\mathtt{\deltab}$ involving transcendental functions, $\fptaylor$ computes the tightest bounds, when comparing with $\realtofloat$ and $\rosa$.\footnote{The running execution times of $\rosa$ may change with more recent versions}
One current limitation of $\realtofloat$ is its limited ability to manipulate symbolic expressions, e.g. computing rational function derivatives or yielding reduction to the same denominator. In particular, the analysis of program \code{s} aborted after running out of memory (meaning of the symbol OoM).
The $\fptaylor$ tool is better suited to handle programs that exhibit such rational functions and also includes an interface with the \textsc{Maxima} computer algebra system~\cite{maxima} to perform symbolic simplifications.

We mention that the interested reader can find more detailed experimental comparisons between the three tools implementing improved rounding models ($\fptaylor$, {\sc Fluctuat} and {\sc Gappa}) in~\cite[Section 5.2]{fptaylor15}. 
Note that $\fptaylor$ provides the tightest bounds for $24$ out of $\nbenchs$ benchmarks. 
The {\sc Gappa} software provides the tightest bounds for $8$ out of $\nbenchs$ benchmarks while being almost always faster than other tools.
As emphasized in~\cite{fptaylor15}, {\sc Gappa} can sometimes compute more precise bounds with more advanced user-provided hints.
Note that {\sc Fluctuat} provides  tighter bounds than {\sc Gappa} for most rational functions while being always slower. 
It would be worth implementing the same improved rounding model in $\realtofloat$ to perform numerical comparisons.


To the best of our knowledge, $\realtofloat$ is the only academic tool which is able to handle the general class of programs involving either transcendental functions or conditional statements. The $\fptaylor$ (resp.~$\rosa$) tool does not currently handle conditionals (resp.~transcendental functions), as meant by the symbol $-$ in the corresponding column entries. 
However, an interface bridging the $\fptaylor$ and $\rosa$ tools would provide each other with the relevant missing features. 
These error bound comparison results together with their corresponding execution timings (given in Table~\ref{table:cpu}) are used to plot the data points shown in Figure~\ref{fig:timebound}.
\begin{table}[!ht]
\begin{center}
\tbl{Comparison of execution times (in seconds) for absolute roundoff error bounds among tools implementing either simple or advanced rounding model. For each model, the winner results are emphasized using \textbf{bold fonts}.\label{table:cpu}}{
\begin{tabular}{p{2.3cm}c|ccc|ccc}
\hline
& & \multicolumn{3}{c|}{Simple rounding} & \multicolumn{3}{c}{Improved rounding} \\
\multirow{1}{*}{Benchmark} & \texttt{id} & $\realtofloat$ & $\rosa$  & $\fptaylor$ & $\fptaylor$ & {\sc Gappa} & {\sc Fluctuat} \\
\hline

\multirow{1}{*}{\texttt{rigidBody1}} & \texttt{a} &
${0.58}$ & $\mathbf{0.13}$ & $1.84$ & $0.41$ & $\mathbf{0.10}$ & ${0.29}$ \\
\multirow{1}{*}{\texttt{rigidBody2}} & \texttt{b} &
$\mathbf{0.26}$ & $2.17$ & $3.01$ & ${0.46}$ & $\mathbf{0.15}$ & $0.52$ \\
\multirow{1}{*}{\texttt{kepler0}} & \texttt{c} & 
$\mathbf{0.22}$ & $3.78$ & $4.93$ & $6.96$ & $\mathbf{0.44}$ & ${0.70}$\\
\multirow{1}{*}{\texttt{kepler1}} & \texttt{d} &
${17.6}$ & $63.1$ & $\mathbf{9.33}$ & $4.90$ &  ${0.72}$ & $\mathbf{0.37}$ \\
\multirow{1}{*}{\texttt{kepler2}} & \texttt{e} &
$\mathbf{16.5}$ & $106$ & $19.1$ & $13.8$ & $\mathbf{1.58}$ & ${2.04}$ \\
\multirow{1}{*}{\texttt{sineTaylor}} & \texttt{f} &
$\mathbf{1.05}$ & $3.50$ & $2.91$ & $\mathbf{0.11}$ & ${0.16}$ & ${0.55}$ \\
\multirow{1}{*}{\texttt{sineOrder3}} & \texttt{g} &
$\mathbf{0.40}$ & $0.48$ & $1.90$ & $0.40$ & $\mathbf{0.06}$ & ${0.22}$ \\
\multirow{1}{*}{\texttt{sqroot}} & \texttt{h} &
$\mathbf{0.14}$ & $0.77$ & $2.70$ & $0.44$ & $\mathbf{0.19}$ &  ${0.40}$ \\
\multirow{1}{*}{\texttt{himmilbeau}} & \texttt{i} &
$\mathbf{0.20}$ & $2.51$ & $3.28$ & ${0.49}$ & $\mathbf{0.09}$ & $1.00$ \\
\multirow{1}{*}{\texttt{doppler1}} & \texttt{j} &
$6.80$ & ${6.35}$ & $\mathbf{6.13}$ & $1.34$ & $\mathbf{0.08}$ & ${0.77}$\\
\multirow{1}{*}{\texttt{doppler2}} & \texttt{k} &
$6.96$ & $\mathbf{6.54}$ & $6.88$ & $1.57$ & $\mathbf{0.08}$ & ${0.79}$ \\
\multirow{1}{*}{\texttt{doppler3}} & \texttt{l} &
$6.84$ & $\mathbf{6.37}$ & $9.13$ & $1.50$ & $\mathbf{0.07}$ &  ${0.78}$ \\
\multirow{1}{*}{\texttt{verhulst}} & \texttt{m} & 
$\mathbf{0.51}$ & $1.36$ & $1.37$ & $0.40$ & $\mathbf{0.05}$ & ${0.25}$ \\
\multirow{1}{*}{\texttt{carbonGas}} & \texttt{n} &
$\mathbf{0.83}$ & $6.59$ & $3.73$ & ${0.52}$ & $\mathbf{0.15}$ & ${2.03}$ \\
\multirow{1}{*}{\texttt{predPrey}} & \texttt{o} &
$\mathbf{0.87}$ & $4.12$ & $1.78$ & ${0.48}$ & $\mathbf{0.04}$ & $5.14$ \\
\multirow{1}{*}{\texttt{turbine1}} & \texttt{p} &
$72.2$ & $\mathbf{3.09}$ & $4.38$ & ${0.53}$ & $\mathbf{0.21}$ & $5.79$ \\
 \multirow{1}{*}{\texttt{turbine2}} & \texttt{q} &
$4.72$ & $7.75$ & $\mathbf{3.25}$ & ${0.60}$ & $\mathbf{0.12}$ & $4.76$ \\
 \multirow{1}{*}{\texttt{turbine3}} & \texttt{r} &
$74.5$ & $4.57$ & $\mathbf{3.46}$ & ${0.61}$ & $\mathbf{0.20}$ & $5.84$ \\
 \multirow{1}{*}{\texttt{jetEngine}} & \texttt{s} &
$\text{OoM}$  & $125$ & $\mathbf{9.79}$ & ${3.06}$ & $\mathbf{0.31}$ & $31.2$\\
\multirow{1}{*}{\texttt{floudas2\_6}} & \texttt{t} &
$\mathbf{2.49}$  & $159$ & $15.9$ & ${14.3}$ & $\mathbf{2.35}$ & $26.8$\\
\multirow{1}{*}{\texttt{floudas3\_3}} & \texttt{u} &
$\mathbf{0.45}$ & $13.9$ & $5.64$ & $13.9$ & $\mathbf{0.76}$ & $6.41$\\
\multirow{1}{*}{\texttt{floudas3\_4}} & \texttt{v} &
$\mathbf{0.09}$ & ${0.49}$ & $1.47$ & $1.27$ & $\mathbf{0.07}$ & $1.01$\\
\multirow{1}{*}{\texttt{floudas4\_6}} & \texttt{w} &
$\mathbf{0.07}$ & $1.20$ & $0.91$ & $0.37$ & $\mathbf{0.05}$ & $1.69$\\
\multirow{1}{*}{\texttt{floudas4\_7}} & \texttt{x} &
$\mathbf{0.13}$ & $21.8$ & $1.64$ & $0.39$ & $\mathbf{0.07}$ & $0.53$\\
\multirow{1}{*}{\texttt{cav10}} & \texttt{y} &
$\mathbf{0.23}$ & $0.59$ & $-$ & $-$ & $-$ & $1.26$ \\
\multirow{1}{*}{\texttt{perin}} & \texttt{z} &
$\mathbf{0.49}$ & $2.74$ & $-$ & $-$ & $-$ & $1.19$\\
\multirow{1}{*}{\texttt{logexp}} & \texttt{\textalpha} &
$\mathbf{1.05}$ & $-$ & ${1.10}$ & $\mathbf{0.39}$ & $-$ & $-$\\
\multirow{1}{*}{\texttt{sphere}} &  \texttt{\textbeta} &
$\mathbf{0.05}$ & $-$ & $2.04$ & $\mathbf{3.69}$ & $-$ & $-$\\
\multirow{1}{*}{\texttt{hartman3}} & \texttt{\textgamma} &
$\mathbf{2.02}$ & $-$ & $32.5$ & $\mathbf{27.8}$ & $-$ & $-$\\
\multirow{1}{*}{\texttt{hartman6}} & \texttt{\textdelta} &
$\mathbf{119}$ & $-$ & $364$ & $\mathbf{259}$ & $-$ & $-$\\
\hline
\end{tabular}
}
\end{center}
\end{table}
For each benchmark identified by \code{id}, let $t_{\realtofloat}$ (in 3rd column of Table~\ref{table:cpu}) refer to the execution time of $\realtofloat$ to obtain the corresponding upper bound $\epsilon_{\realtofloat}$ (in 3rd column of Table~\ref{table:error}).

Now, let us define the execution times $t_{\rosa}$, $t_{\fptaylor}$ and the corresponding error bounds $\epsilon_{\rosa}$, $\epsilon_{\fptaylor}$. Then the x-axis coordinate of the point \circled{id} (resp.~\squared{id}) displayed in Figure~\ref{fig:timeboundrosa} (resp.~\ref{fig:timeboundfptaylor}) corresponds to the logarithm of the ratio between the execution time of $\rosa$ (resp.~$\fptaylor$) and $\realtofloat$, i.e.~$\log \frac{t_{\rosa}}{t_{\realtofloat}}$ (resp.~$\log \frac{t_{\fptaylor}}{t_{\realtofloat}}$). Similarly, the y-axis coordinate of the point \circled{id} (resp.~\squared{id}) is $\log \frac{\epsilon_{\rosa}}{\epsilon_{\realtofloat}}$ (resp.~$\log \frac{\epsilon_{\fptaylor}}{\epsilon_{\realtofloat}}$). 

\begin{figure}[!ht]
\begin{center}
\subfigure[Comparison with $\rosa$.]{
\includegraphics[scale=0.53]{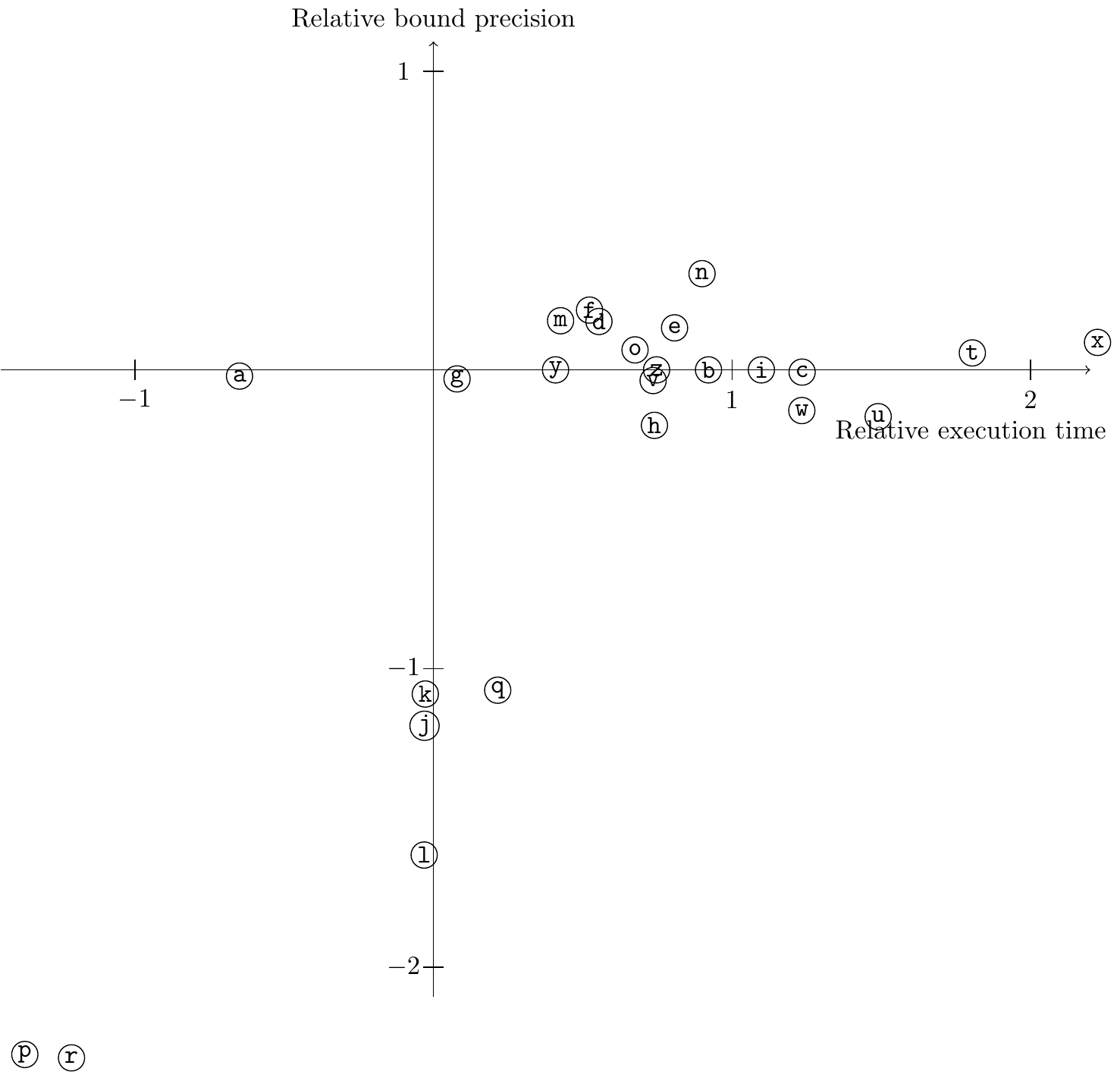}
\label{fig:timeboundrosa}
}
\subfigure[Comparison with $\fptaylor$.]{
\includegraphics[scale=0.53]{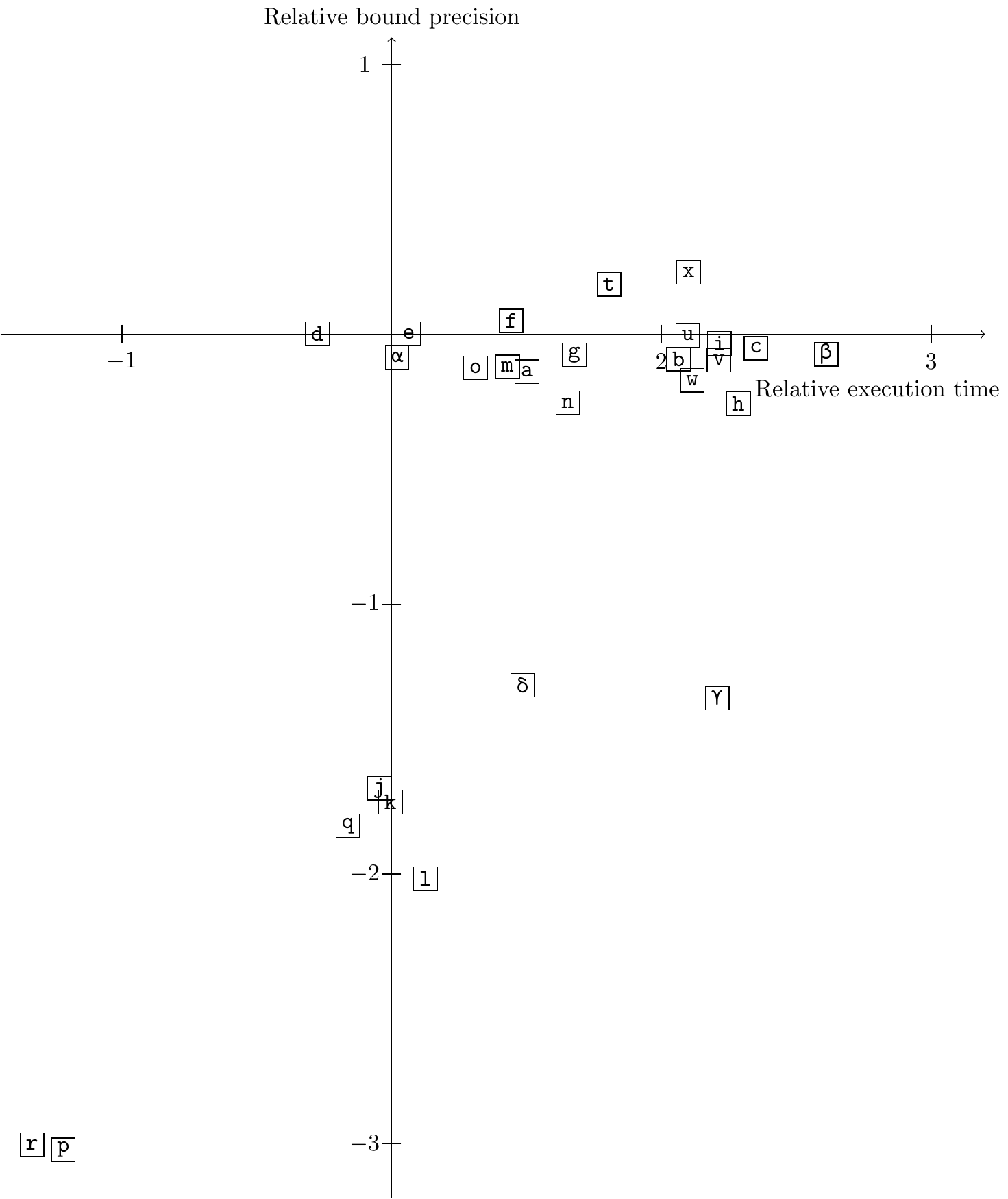}
\label{fig:timeboundfptaylor}
}
\caption{Comparisons of execution times and upper bounds of roundoff errors obtained with $\rosa$ and $\fptaylor$, relatively to $\realtofloat$.}
\label{fig:timebound}
\end{center}
\end{figure}

The axes of the coordinate system within Figure~\ref{fig:timebound} divide the plane into four quadrants: 
the nonnegative quadrant $(+,+)$ contains data points referring to programs for which $\realtofloat$ computes the tighter bounds in less time, 
the second one $(+,-)$ contains points referring to programs for which $\realtofloat$ is faster  but less accurate, 
the non-positive quadrant $(-, -)$ for which $\realtofloat$ is slower and computes coarser bounds 
and 
the last one $(-,+)$ for which $\realtofloat$ is slower but more accurate. 

On the quadrant $(+,-)$ of Figure~\ref{fig:timeboundfptaylor}, one can see that $\realtofloat$ computes bounds which are less accurate than $\fptaylor$ on semialgebraic and transcendental programs, but  does so more quickly for most of them. The quadrant $(-, -)$ indicates that $\rosa$ and $\fptaylor$ are more precise and efficient than $\realtofloat$ on the three programs \code{p-r}. The presence of 18 plots on the nonnegative quadrant $(+, +)$ of Figure~\ref{fig:timeboundrosa} and Figure~\ref{fig:timeboundfptaylor} confirms that $\realtofloat$ does not compromise efficiency at the expense of accuracy, in particular for programs implementing polynomials with nontrivial polynomial preconditions. 

%

For each program implementing polynomials, our tool has an option to provide formal guarantees for the corresponding roundoff error bound $\epsilon_{\realtofloat}$. Using the formal mechanism described in Section~\ref{sec:coqbackground}, $\realtofloat$  formally checks inside $\coq$ the SOS certificates generated by the SDP solver for interval enclosures of linear error terms $l$. 
The $\fptaylor$ (resp.~{\sc Gappa}) software has a similar option to provide formal scripts which can be checked inside the $\hol$ (resp.~$\coq$) proof assistant, for programs involving polynomial and transcendental functions.
For formal verification, our tool is limited compared with $\fptaylor$ and {\sc Gappa} as it cannot handle non-polynomial programs.

\begin{table}[!ht]
\begin{center}
\tbl{Comparisons of informal and formal execution times to certify roundoff error bounds obtained with $\realtofloat$, $\fptaylor$ and {\sc Gappa}.\label{table:cpuformal}}{
\begin{tabular}{p{2.3cm}c|ccc|ccc}
\hline
& & \multicolumn{3}{c|}{Informal execution time} & \multicolumn{3}{c}{Formal execution time} \\
\multirow{1}{*}{Benchmark} & \texttt{id} & $\realtofloat$ & $\fptaylor$  & {\sc Gappa} & $\realtofloat$ & $\fptaylor$  & {\sc Gappa}\\
\hline
\multirow{1}{*}{\texttt{rigidBody1}} & \texttt{a} &
0.58 & 1.84 & 0.10 & 0.36 & 10.2 & 2.58 \\
\multirow{1}{*}{\texttt{rigidBody2}} & \texttt{b} &
0.26 & 3.01 & 0.15 & 4.81 & 32.3 & 4.90 \\
\multirow{1}{*}{\texttt{kepler0}} & \texttt{c} & 
0.22 & 4.93 & 0.44 & 0.29 & 45.5 & 13.0 \\
\multirow{1}{*}{\texttt{kepler1}} & \texttt{d} &
17.6 & 9.33 & 0.72 & 449 & 90.5 & 28.9 \\
\multirow{1}{*}{\texttt{kepler2}} & \texttt{e} &
16.5 & 19.1 & 1.58 & 297 & 274 & 59.3 \\
\multirow{1}{*}{\texttt{sineTaylor}} & \texttt{f} &
1.05 & 2.91 & 0.16 & 7.54 & 42.1 & 13.6 \\
\multirow{1}{*}{\texttt{sineOrder3}} & \texttt{g} &
0.40 & 1.90 & 0.06 & 0.34 & 10.4 & 6.74 \\
\multirow{1}{*}{\texttt{sqroot}} & \texttt{h} &
0.14 & 2.70 & 0.19 & 0.80 & 16.8 & 12.9 \\
\multirow{1}{*}{\texttt{himmilbeau}} & \texttt{i} &
0.20 & 3.28 & 0.09 & 0.89 & 32.2 & 3.73 \\
\hline
\end{tabular}

\if{
let trel treal2float tfptaylor treal2floatformal tfptaylorformal = ( tfptaylor /.treal2float ), ( (tfptaylorformal +. tfptaylor) /. (treal2floatformal +. treal2float)) ;;

let terel i treal2float tfptaylor treal2floatformal tfptaylorformal  = 
let a, b = trel treal2float tfptaylor treal2floatformal tfptaylorformal in 
Printf.sprintf "
" 
treal2float tfptaylor treal2floatformal tfptaylorformal a b  ;;

let s1 = terel 'a' 0.58  1.84 0.36 10.2  in
let s2 = terel 'b'  0.26  3.01 4.81 32.3  in
let s3 = terel 'c'  0.22  4.93 0.29  45.5 in

let s4 = terel 'd'  17.6  9.33 449. 90.5  in
let s5 = terel 'e'  16.5  19.1 297.0 274.0 in
let s6 = terel 'f'  1.05   2.91 7.54  42.1  in
let s7 = terel 'g'  0.40  1.90 0.34 10.4   in
let s8 = terel 'h'  0.14   2.70 0.80 16.8 in
let s9 = terel 'i'  0.20   3.28 0.89 32.2 in

print_endline (s1^s2^s3 ^ s4 ^ s5 ^ s6 ^ s7 ^s8^s9);;

}\fi}
\end{center}
\end{table}

Next, we describe the formal proof results obtained while verifying the bounds for the nine polynomial programs \code{a-i}.  Table~\ref{table:cpuformal} allows us to compare the execution times of $\realtofloat$, $\fptaylor$ and {\sc Gappa} required to analyze the nine programs in both informal (i.e. without verification inside $\coq$ or $\hol$) and formal settings. 
Comparing $\realtofloat$ with $\fptaylor$, 
the latter performs better than the former to analyze formally the two programs \code{d-e} but yields coarser bounds. Our formal procedure is less computationally efficient when the degree (resp.~number of variables) of the SOS polynomials grows, as for these two programs.
The informal speedup ratio is greater than 3 for the five programs \code{a}, \code{c} and \code{g-i}. The table shows that the speedup ratio in the formal setting is higher than in the informal setting for these five programs. 
%
The explanation could be that $\coq$ is inherently faster than $\hol$ at checking computations while delegating expensive ones to a virtual machine (being part of $\coq$'s trusted base). 
Either SOS or Taylor methods could work with the two proof assistants $\hol$ and $\coq$ (with natural changes in execution time) and  it would be meaningful to compare similar methods in a given proof assistant (SOS techniques in $\hol$ or Taylor methods in $\coq$) but both are not yet fully available. 
%
The {\sc Gappa} tool performs better when analyzing the two programs \code{d-e}, but is less accurate. This is in contrast with all other programs where the formal verification with $\realtofloat$ is faster than with {\sc Gappa} while providing coarser bounds (except program  \texttt{f} where {\sc Gappa} yields pessimistic results)
Our formal verification framework is a work in progress as it only allows to check correctness of SOS certificates for polynomial programs. 
The step from formal verification of the step translating the inequality to a statement of explicit syntactic form ``$\forall \x, |\texttt{rounded}(f(\x)) - f(\x)| \leq e$'' is missing and requires more software engineering. 
Thus, the timings presented in Table~IV for $\realtofloat$ do not include formal computation of the symbolic second-order derivatives of $r$ w.r.t.~$\e$ as well as the cost of bounding them using formal interval arithmetic.

At the moment (and in contrast to our method) the $\rosa$ tool does not formally verify the output bound provided by the SMT solver, but such a feature could be embedded through an interface with the $\smtcoq$ framework~\cite{smtcoq}. This latter tool allows the proof witness generated by an SMT solver to be formally (and independently) re-checked inside $\coq$.   The $\smtcoq$ framework uses tactics based on {\em computational reflection} to enable this re-checking to be performed efficiently.

%

%
\section{CONCLUSION AND PERSPECTIVES} 
Our verification framework allows us to over-approximate roundoff errors occurring while executing nonlinear programs implemented with finite precision.
The framework relies on semidefinite optimization, ensuring certified approximations. Our approach extends to medium-size nonlinear problems, due to  automatic detection of the correlation sparsity pattern of input variables and roundoff error variables. 
Experimental results indicate that the optimization algorithm implemented in our $\realtofloat$ software package can often produce bounds of quality similar to the ones provided by the competitive solvers $\rosa$ and $\fptaylor$, while saving a significant amount of CPU time. In addition, $\realtofloat$ produces sums of squares certificates which guarantee the correctness of these upper bounds and can can be efficiently verified inside the $\coq$ proof assistant. 

This work yields several directions for further research investigation. 
First, we intend to increase the size of graspable instances by exploiting the SDP relaxations specifically tuned to the case when a program implements the sum of many rational functions~\cite{Rat15}. Symmetry patterns of certain program sub-classes could be tackled with the SDP hierarchies from~\cite{Riener2013SymmetricSDP}. We could also provide roundoff error bounds for more general programs, involving either finite or infinite conditional loops and additional comparisons with the results described in~\cite{rosaloop}. A preliminary mandatory step is to be able to generate inductive invariants with SDP relaxations. Another interesting direction would be to apply the method used in~\cite{Lasserre11} to derive sequences of lower roundoff error bounds together with SDP-based certificates.
On the formal proof side, we could benefit from floating-point/interval arithmetic libraries available inside $\coq$, first to improve the efficiency of the formal polynomial checker, currently relying on exact arithmetic, then to extend the formal verification to non-polynomial programs.
The method implemented in $\fptaylor$ happens to be efficient and precise to analyze various programs and it would be interesting to design a procedure combining $\fptaylor$ with our tool on specific subsets of input constraints. Long-term research perspectives include theoretical study of why/when SOS performs better as well as satisfactory complexity results about SOS certificates (w.r.t. size of polynomials).
%
Finally, we plan to combine this optimization framework with the procedure in~\cite{Gao15FPGA} to improve the automatic reordering of arithmetic expressions, allowing more efficient optimization of FPGA implementations.






\appendix
\section*{NONLINEAR PROGRAM BENCHMARKS}
\setcounter{section}{1}
\label{sec:appa}
{\scriptsize
\begin{lstlisting}

let box_rigidbody1 $x_1 \, x_2 \, x_3 = [(-15, 15); (-15, 15); (-15, 15)];;$
let obj_rigidbody1 $x_1 \, x_2 \, x_3 = [(
-x_1*x_2 - 2 * x_2 * x_3 - x_1 - x_3, \,0)];;$

let box_rigidbody2 $x_1 \, x_2 \, x_3 = [(-15, 15); (-15, 15); (-15, 15)];;$
let obj_rigidbody2 $x_1 \, x_2 \, x_3 = $
$[(2*x_1*x_2*x_3 + 3*x_3*x_3 - x_2*x_1*x_2*x_3 + 3*x_3*x_3 - x_2, 0)];;$

let box_kepler0 $x_1 \, x_2 \, x_3 \, x_4 \, x_5 \, x_6 = [(4, 6.36); (4, 6.36); (4, 6.36); (4, 6.36); (4, 6.36); (4, 6.36)];;$
let obj_kepler0 $x_1 \, x_2 \, x_3 \, x_4 \, x_5 \, x_6 = $
$[(x_2 * x_5 + x_3 * x_6 - x_2 * x_3 - x_5 * x_6 + x_1 * ( - x_1 + x_2 + x_3 - x_4 + x_5 + x_6), \,0)];;$

let box_kepler1 $x_1 \, x_2 \, x_3 \, x_4 = [(4, 6.36); (4, 6.36); (4, 6.36); (4, 6.36)];;$
let obj_kepler1 $x_1 \, x_2 \, x_3 \, x_4 = [( x_1 * x_4 * (- x_1 + x_2 + x_3 - x_4)$
$+ x_2 * (x_1 - x_2 + x_3 + x_4) + x_3 * (x_1 + x_2 - x_3 + x_4) $
$- x_2 * x_3 * x_4 - x_1 * x_3 - x_1 * x_2 - x_4, \,0)];; $

let box_kepler2 $x_1 \, x_2 \, x_3 \, x_4 \, x_5 \, x_6 = [(4, 6.36); (4, 6.36); (4, 6.36); (4, 6.36); (4, 6.36); (4, 6.36)];;$
let obj_kepler2 $x_1 \, x_2 \, x_3 \, x_4 \, x_5 \, x_6 = [(x_1 * x_4 * (- x_1 + x_2 + x_3$
$- x_4 + x_5 + x_6) + x_2 * x_5 * (x_1 - x_2 +x_3 +x_4- x_5 +x_6) $
$+ x_3* x_6* (x_1 + x_2 - x_3 + x_4 + x_5 - x_6) - x_2* x_3* x_4 $
$- x_1* x_3* x_5 - x_1* x_2* x_6 - x_4* x_5* x_6, \, 0)];;$

let box_sineTaylor$ \, x = [(-1.57079632679, 1.57079632679)];;$
let obj_sineTaylor$ \, x = [(x - (x*x*x)/6.0$
$+ (x*x*x*x*x)/120.0 $
$- (x*x*x*x*x*x*x)/5040.0,\, 0)];;$

let box_sineOrder3 $z = [(-2, 2)];;$
let obj_sineOrder3 $z = [(0.954929658551372 * z -0.12900613773279798*(z*z*z),\, 0)];;$

let box_sqroot $y = [(0,1)];;$
let obj_sqroot $y = [(1.0 + 0.5*y - 0.125*y*y + 0.0625*y*y*y - 0.0390625*y*y*y*y, \,0)];;$

let box_himmilbeau $x_1 \, x_2 = [(-5, 5); (-5, 5)];;$
let obj_himmilbeau $x_1 \, x_2 = [( $
$(x_1*x_1 + x_2 - 11)*(x_1*x_1 + x_2 - 11) + (x_1 + x_2*x_2 - 7)* (x_1 + x_2*x_2 - 7), \,0)];; $

let box_doppler1 $u$ $v$ $T = [(-100, 100);(20, 20e3);(-30, 50)];;$ 
let obj_doppler1 $u$ $v$ $T = [($let $t_1 = 331.4 + 0.6 * T$ in $-t_1*v/((t_1 + u)*(t_1 + u)), \,0)];;$

let box_doppler2 $u$ $v$ $T = [(-125, 125);(15, 25e3);(-40, 60)];;$ 
let obj_doppler2 $u$ $v$ $T = [($let $t_1 = 331.4 + 0.6 * T$ in $-t_1*v/((t_1 + u)*(t_1 + u)), \,0)];;$

let box_doppler3 $u$ $v$ $T = [(-30, 120);(320, 20300);(-50, 30)];;$ 
let obj_doppler3 $u$ $v$ $T = [($let $t_1 = 331.4 + 0.6 * T$ in $-t_1*v/((t_1 + u)*(t_1 + u)), \,0)];;$

let box_verhulst $x = [(0.1, 0.3)];;$
let obj_verhulst $x = [( 4 * x / (1 + (x/1.11)), \, 0)];;$

let box_carbonGas $v = [(0.1, 0.5)];;$
let obj_carbonGas $v = [($let $p = 3.5e7$ in let $a = 0.401$ in 
let $b = 42.7\text{e--}6$ in let $t = 300$ in let $n = 1000$ in
$(p + a * (n/v)**2) * (v - n * b) - 1.3806503\text{e--}23 * n * t
,\, 0)];;$

let box_predPrey$ \,x = [(0.1, 0.3)];;$
let obj_predPrey$\, x = [(4 * x * x/(1 + (x/1.11)**2), \,0)];;$

let box_turbine1 $v$ $w$ $r = [(-4.5, -0.3); (0.4, 0.9); (3.8, 7.8)];;$
let obj_turbine1 $v$ $w$ $r = [(3 + 2 / (r * r) - 0.125 * (3 - 2 * v) * (w * w * r * r) / (1 - v) - 4.5, 0)];;$

let box_turbine2 $v$ $w$ $r = [(-4.5, -0.3); (0.4, 0.9); (3.8, 7.8)];;$
let obj_turbine2 $v$ $w$ $r = [(6*v - 0.5 * v * (w*w*r*r) / (1-v) - 2.5, 0)];;$

let box_turbine3 $v$ $w$ $r = [(-4.5, -0.3); (0.4, 0.9); (3.8, 7.8)];;$
let obj_turbine3 $v$ $w$ $r = [(3 - 2/(r*r) - 0.125 * (1+2*v) * (w*w*r*r) / (1-v) - 0.5, 0)];;$

let box_jet $x_1 \, x_2 = [(-5, 5); (-20, 5)];;$
let obj_jet $x_1 \, x_2 = [($
$x_1 + ((2*x_1*((3*x_1*x_1 + 2*x_2 - x_1)/(x_1*x_1 + 1))$
$*((3*x_1*x_1 + 2*x_2 - x_1)/(x_1*x_1 + 1) - 3) $
$+ x_1*x_1*(4*((3*x_1*x_1 + 2*x_2 - x_1)/(x_1*x_1 + 1))-6))$
$* (x_1*x_1 + 1) + 3*x_1*x_1*((3*x_1*x_1 + 2*x_2 - x_1)/(x_1*x_1 + 1)) $
$+ x_1*x_1*x_1 + x_1 +    3*((3*x_1*x_1 + 2*x_2 -x_1)/(x_1*x_1 + 1))),0)];;$

let box_floudas2_6 $x_1 \, x_2 \, x_3 \, x_4 \, x_5 \, x_6 \, x_7 \, x_8 \, x_9 \, x_{10} = $ 
$[(0, 1); (0, 1); (0, 1); (0, 1); (0, 1); (0, 1); (0, 1); (0, 1); (0, 1); (0, 1)];;$
let cstr_floudas2_6  $x_1 \, x_2 \, x_3 \, x_4 \, x_5 \, x_6 \, x_7 \, x_8 \, x_9 \, x_{10} = [$
$-4  +2 * x_1 +6 * x_2 + 1 * x_3 + 0 *x_4 +3 * x_5 +3 * x_6 +2 * x_7 +6 * x_8 +2 * x9 +2 * x_{10};$
$22  -(6 * x_1 -5 * x_2 + 8 * x_3 -3 *x_4 +0 * x_5 +1 * x_6 +3 * x_7 +8 * x_8 +9 * x9 -3 * x_{10});$
$-6  -(5 * x_1 +6 * x_2 + 5 * x_3 + 3 *x_4 +8 * x_5 -8 * x_6 +9 * x_7 +2 * x_8 +0 * x9 -9 * x_{10});$
$-23 -(9 * x_1 +5 * x_2 + 0 * x_3 -9 *x_4 +1 * x_5 -8 * x_6 +3 * x_7 -9 * x_8 -9 * x9 -3 * x_{10});$
$-12 -(-8 * x_1 +7 * x_2 -4 * x_3 -5 *x_4 -9 * x_5 +1 * x_6 -7 * x_7 -1 * x_8 +3 * x9 -2 * x_{10})];;$
let obj_floudas2_6  $x_1 \, x_2 \, x_3 \, x_4 \, x_5 \, x_6 \, x_7 \, x_8 \, x_9 \, x_{10} = [($
$48 * x_1+42*x_2 + 48 * x_3 +  45  * x_4 + 44  * x_5 + 41  * x_6 + 47  * x_7$
$+ 42*x_8 + 45  * x9 + 46 *x_{10} $
$- 50 * (x_1*x_1 + x_2*x_2 + x_3*x_3 +x_4*x_4 + x_5*x_5 $ 
$+ x_6*x_6 + x_7*x_7 + x_8*x_8 + x9*x9 + x_{10}*x_{10}), 0)];;$

let box_floudas3_3 $x_1 \, x_2 \, x_3 \, x_4 \, x_5 \, x_6 = [(0, 6); (0, 6); (1, 5); (0, 6); (1, 5); (0, 10)];;$
$- (x_2 - 2)**2 - (x_3 - 1)**2 - (x_4 - 4)**2 - (x_5 - 1)**2 - (x_6 - 4)**2, \, 0)];;$
let cstr_floudas3_3 $x_1 \, x_2 \, x_3 \, x_4 \, x_5 \, x_6 = $
$[(x_3 - 3)**2 + x_4 - 4; (x_5 - 3)**2 + x_6 - 4; $
$2 - x_1 + 3 * x_2; 2 + x_1 - x_2; 6 - x_1 - x_2; x_1 + x_2 - 2];;$
let obj_floudas3_3 $x_1 \, x_2 \, x_3 \, x_4 \, x_5 \, x_6 = [( -25 * (x_1 - 2)**2 $
$- (x_2 - 2)**2 - (x_3 - 1)**2 - (x_4 - 4)**2 $
$- (x_5 - 1)**2 - (x_6 - 4)**2, \, 0)];;$

let box_floudas3_4 $x_1 \, x_2 \, x_3 = [(0, 2); (0, 2); (0, 3)];;$
let cstr_floudas3_4 $x_1 \, x_2 \, x_3 = [$
$4 - x_1 - x_2 - x_3; 6 - 3 * x_2 - x_3;$
$-0.75+2*x_1-2*x_3+4*x_1*x_1-4*x_1*x_2$
$+4*x_1*x_3+2*x_2*x_2-2*x_2*x_3+2*x_3*x_3];;$
let obj_floudas3_4 $x_1 \, x_2 \, x_3 = [(-2 * x_1 + x_2 - x_3, 0)];;$

let box_floudas4_6$\,x_1 \, x_2 = [(0, 3); (0, 4)];;$
let cstr_floudas4_6$\,x_1 \, x_2 = [$
$2 * x_1**4 - 8 * x_1**3 + 8 * x_1*x_1 - x_2; $
$4 * x_1**4 - 32 * x_1**3 + 88 * x_1*x_1 - 96 * x_1 + 36 - x_2];;$
let obj_floudas4_6$\,x_1 \, x_2 = [(-x_1 - x_2, \, 0)];;$

let box_floudas4_7$\,x_1 \, x_2 = [(0, 2); (0, 3)];;$
let cstr_floudas4_7$\,x_1 \, x_2 = [-2 * x_1**4 + 2 - x_2];;$
let obj_floudas4_7$\,x_1 \, x_2 = [(-12 * x_1 - 7 * x_2 + x_2*x_2, \, 0)];;$

let box_cav10 $x = [(0, 10)];;$
let obj_cav10 $x = [($ if $(x*x - x > 0)$ then $x*0.1$ else $x*x+2, \, 0)];;$

let box_perin $x \, y = [(1,7); (-2, 7)];;$
let cstr_perin $x \, y = [x-1; y+2; x-y; 5-y-x];;$
let obj_perin $x \, y = [($ if $(x*x + y*y \leq 4)$ then $y * x$ else $0, \, 0)];; $

let box_logexp $x = [(-8,8)];;$
let obj_logexp $x = [(\log(1 + \exp(x)), \, 0)];;$

let box_sphere $x \, r \, y \, z = [(-10, 10); (0, 10); (-1.570796, 1.570796); (-3.14159265, 3.14159265)];;$
let obj_sphere $x \, r \, y \, z = [(x + r * \sin (y) * \cos(z),\,0)];;$

let box_hartman3 $x_1 \, x_2 \, x_3 = [(0, 1); (0, 1);(0, 1)];;$
let obj_hartman3 $x_1 \, x_2 \, x_3 = [($
let $e1 = 3.0 * (x_1 - 0.3689) **2 + 10.0 * (x_2 - 0.117) **2 + 30.0 * (x_3 - 0.2673) **2$ in
let $e2 = 0.1 * (x_1 - 0.4699) **2 + 10.0 * (x_2 - 0.4387) **2 + 35.0 * (x_3 - 0.747) **2$ in
let $e3 = 3.0 * (x_1 - 0.1091) **2 + 10.0 * (x_2 - 0.8732) **2 + 30.0 * (x_3 - 0.5547) **2$ in
let $e4 = 0.1 * (x_1 - 0.03815) **2 + 10.0 * (x_2 - 0.5743) **2 + 35.0 * (x_3 - 0.8828) **2$ in
$- (1.0 * \exp(-e1) + 1.2 * \exp(-e2) + 3.0 * \exp(-e3) + 3.2 * \exp(-e4)), \, 0)];;$

let box_hartman6 $x_1 \, x_2 \, x_3 \, x_4 \, x_5 \, x_6 = [(0, 1); (0, 1);(0, 1);(0, 1);(0, 1);(0, 1)];;$
let obj_hartman6 $x_1 \, x_2 \, x_3 \, x_4 \, x_5 \, x_6 = [($
let $e1 = 10.0 * (x_1 - 0.1312)**2 + 3.0 * (x_2 - 0.1696)**2 + 17. * (x_3 - 0.5569)**2 + 3.5 * (x_4 - 0.0124)**2 $
$+ 1.7 * (x_5 - 0.8283)**2 + 8.0 * (x_6 - 0.5886)**2$ in
let $e2 = 0.05 * (x_1 - 0.2329)**2 + 10 * (x_2 - 0.4135)**2 + 17.0 * (x_3 - 0.8307)**2 + 0.1 * (x_4 - 0.3736)**2$
$+ 8.0 * (x_5 - 0.1004)**2 + 14.0 * (x_6 - 0.9991)**2$ in
let $e3 = 3.0 * (x_1 - 0.2348)**2 + 3.5 * (x_2 - 0.1451)**2 + 1.7 * (x_3 - 0.3522)**2 + 10.0 * (x_4 - 0.2883)**2$
$+ 17.0 * (x_5 - 0.3047)**2 + 8.0 * (x_6 - 0.665)**2$ in
let $e4 = 17.0 * (x_1 - 0.4047)**2 + 8 * (x_2 - 0.8828)**2 + 0.05 * (x_3 - 0.8732)**2 + 10.0 * (x_4 - 0.5743)**2$
$+ 0.1 * (x_5 - 0.1091)**2 + 14.0 * (x_6 - 0.0381)**2$ in
$- (1.0 * \exp(-e1) + 1.2 * \exp(-e2) + 3.0 * \exp(-e3) + 3.2 * \exp(-e4)), \, 0)];;$




\end{lstlisting}
}

\if{
{\scriptsize
\begin{lstlisting}
let box_doppler1 u v T = [$(-100, 100); (20, 2e4);(-30, 50)$];; 
let obj_doppler1 u v T = [(
let $t_1 = 331.4 + 0.6 * T$ in 
$-t_1*v/((t_1 + u)*(t_1 + u))$, $3\text{e--}13$)];;
\end{lstlisting}
}
{\scriptsize
\begin{lstlisting}
procedure doppler2(u : real, v : real, T : real) returns (r : real) {
 assume (-125.0 <= u && u <= 125.0 && 15.0 <= v && v <= 25000.0 && -40.0 <= T && T <= 60.0);
  var t1 := 331.4 + 0.6 * T;
  r := -t1*v/((t1 + u)*(t1 + u));
}
\end{lstlisting}
}
{\scriptsize
\begin{lstlisting}
procedure doppler3(u : real, v : real, T : real) returns (r : real) {
 assume (-30.0 <= u && u <= 120.0 && 320.0 <= v && v <= 20300.0 && -50.0 <= T && T <= 30.0);
  var t1 := 331.4 + 0.6 * T;
  r := -t1*v/((t1 + u)*(t1 + u));
}
\end{lstlisting}
}
{\scriptsize
\begin{lstlisting}
procedure rigidBody1(x_1 : real, x_2 : real, x_3 : real) returns (r : real) {
 assume (-15.0 <= x_1 && x_1 <= 15.0 && -15.0 <= x_2 && x_2 <= 15.0 && -15.0 <= x_3 && x_3 <= 15.0);
  r := -x_1*x_2 - 2.0 * x_2 * x_3 - x_1 - x_3;
}
\end{lstlisting}
}
{\scriptsize
\begin{lstlisting}
procedure rigidBody2(x_1 : real, x_2 : real, x_3 : real) returns (r : real) {
 assume (-15.0 <= x_1 && x_1 <= 15.0 && -15.0 <= x_2 && x_2 <= 15.0 && -15.0 <= x_3 && x_3 <= 15.0);
  r := 2.0*x_1*x_2*x_3 + 3.0*x_3*x_3 - x_2*x_1*x_2*x_3 + 3.0*x_3*x_3 - x_2;
}
\end{lstlisting}
}
{\scriptsize
\begin{lstlisting}
procedure sineTaylor(x : real) returns (r : real) {
 assume (-1.57079632679  <= x && x <= 1.57079632679);
    r := x - (x*x*x)/6.0 + (x*x*x*x*x)/120.0 - (x*x*x*x*x*x*x)/5040.0 ;
}
\end{lstlisting}
}
{\scriptsize
\begin{lstlisting}
procedure sineOrder3(x : real) returns (r : real) {
 assume (-2  <= x && x <= 2);
  r := 0.954929658551372 * x - 0.12900613773279798*x*x*x ;
}
\end{lstlisting}
}
{\scriptsize
\begin{lstlisting}
procedure sqroot(x : real) returns (r : real) {
 assume (0  <= x && x <= 1);
  r := 1.0 + 0.5*x - 0.125*x*x + 0.0625*x*x*x - 0.0390625*x*x*x*x;
}
\end{lstlisting}
}
}\fi

\section*{Acknowledgment}
The authors would like to specially acknowledge the precious help of Alexey Solovyev, his excellent feedback and suggestions. 
They also thank the three referees for helpful comments to improve this paper.

\bibliographystyle{ACM-Reference-Format-Journals}


\end{document}